\newtheorem{theorem}{Theorem}
\newtheorem{lemma}{Lemma}
\newtheorem{corollary}{Corollary}
\newtheorem*{mcorollary}{Corollary}
\newtheorem{definition}{Definition}
\newtheorem{remark}{Remark}
\newcommand{\bbbn}{\mathbb{N}}
\newcommand{\Nat}{{\mathbb N}}
\newcommand{\switch}[3]{{#1}\,?\,{#2}\,:\,{#3}}
\newcommand{\restr}[2]{{#1}\upharpoonright_{#2}}
\title{Limits of Mappings}
\thanks{Supported by grant ERCCZ LL-1201 
and CE-ITI, and by the European Associated Laboratory ``Structures in
Combinatorics'' (LEA STRUCO) P202/12/G061}
\author[L. Hosseini]{Lucas Hosseini}
\address{Lucas Hosseini\\
Centre d'Analyse et de Math\'ematiques Sociales (CNRS, UMR 8557)\\
  190-198 avenue de France, 75013 Paris, France}
  \email{lucas.hosseini@gmail.com}
\author[J. Ne{\v s}et{\v r}il]{Jaroslav Ne{\v s}et{\v r}il}
\address{Jaroslav Ne{\v s}et{\v r}il\\
Computer Science Institute of Charles University (IUUK and ITI)\\
   Malostransk\' e n\' am.25, 11800 Praha 1, Czech Republic}
\email{nesetril@kam.ms.mff.cuni.cz}
\author[P. Ossona de Mendez]{Patrice~Ossona~de~Mendez}
\address{Patrice~Ossona~de~Mendez\\
Centre d'Analyse et de Math\'ematiques Sociales (CNRS, UMR 8557)\\
  190-198 avenue de France, 75013 Paris, France
  and
     Computer Science Institute of Charles University (IUUK)\\
   Malostransk\' e n\' am.25, 11800 Praha 1, Czech Republic}
 \email{pom@ehess.fr}
\begin{document}
\maketitle

\footnotetext{\begin{window}[0,r,{\includegraphics[height=7mm]{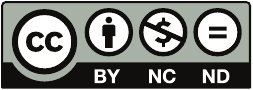}},{}]
    \noindent This work has been submitted to the {\em European Journal of Combinatorics} and 
    is licensed under the Creative Commons Attribution-NonCommercial-NoDerivatives 
    4.0 International License. 
    To view a copy of this license, visit\\
    {\tt creativecommons.org/licenses/by-nc-nd/4.0}.
  \end{window}
  }
  
 \begin{abstract}
 In this paper we consider a simple algebraic structure --- sets with a single endofunction. We shall see that from the point of view of structural limits, even this simplest case is both interesting and difficult. Nevertheless we obtain the shape of limit objects in the full generality, and we prove the inverse theorem in the case of quantifier-free limits.
\end{abstract}

\section{Introduction}
The aim of this paper is to construct analytic limit objects for convergent sequences of finite mappings $f_n:F_n\rightarrow F_n$ (``finite'' meaning that the sets $F_n$ are finite) and, conversely, to approximate a limit object by a finite mapping. This work originated within the scope of the recent studies of graph limits \cite{LovaszBook}, and more precisely within the framework of structural limits \cite{CMUC}. 
As this framework is closely related to finite model theory, instead of describing mappings as $f:F\rightarrow F$ we shall define mappings as structures $\mathbf F$ (boldface) with signature $\{f\}$, where $f$ is a unary function symbol, with domain $F$ (same symbol as $\mathbf F$ but not boldface) and with interpretation of $f$ denoted by  $f_{\mathbf F}$. Hence $f_{\mathbf F}:F\rightarrow F$ and, for $u,v\in F$ we have the two following possible writings for the property that $v$ is the image of $u$: either $f_{\mathbf F}(u)=v$ or
$\mathbf F\models (f(u)=v)$.

In the general framework introduced in  \cite{CMUC}, the notion of convergence of structures is conceptualized 
by means of the convergence of the satisfaction probability of formulas in a fixed fragment of first-order logic: A sequence $(\mathbf A_n)_{n\in\bbbn}$ of finite structures is {\em $X$-convergent} (where $X$ is a given fragment of first-order logic) if, for every formula $\phi\in X$ the probability $\langle \phi,\mathbf A_n\rangle$ of satisfaction of $\phi$ in $\mathbf A_n$ for a random (independent uniform) assignment of elements of the domain $A_n$ of $\mathbf A_n$ to the free variables of $\phi$ converges as $n$ grows to infinity.  
Three fragments of first-order logic, defining a gradation of three notions of convergence, will be of special interest: the fragment QF of quantifier-free formulas, the fragment ${\rm FO}^{\rm local}$ of {\em local} formulas (that is of formulas whose satisfaction only depends on a fixed neighborhood of the free variables) and the full FO fragment of all first-order formulas.

This framework allows one to consider limits of general combinatorial structures, and was  applied to the study of limits of sparse graphs \cite{gajarsky2016first, modeling_arxiv,modeling,  limit1}, 
   matroids \cite{Kardos2017150}, and  tree semi-lattices \cite{QFTSL-arxiv}. 
It is sometimes possible (although this is not the case in general  \cite{limit1}) to represent the limit by a particularly nice analytic object, called {\em modeling}, which is a structure whose domain is a standard Borel space endowed with a Borel probability measure, with the property that every (first-order) definable set is Borel measurable. 


In order to make the motivation of this paper clear, we take time
in Section~\ref{sec:context} for  a quick review of some of the fundamental notions and problems encountered in the domain of graph limits, and how they are related to the study of limits and approximations of algebras (that is of functional structures).  

The first main result of this paper is the construction, for every  ${\rm FO}$-convergent sequence of finite mappings, of a modeling representing the FO-limit of the sequence (Theorem~\ref{thm:folimmap}). As every sequence of finite structures contains an FO-convergent subsequence \cite{CMUC}, this modeling can be used to represent the limits for the (weaker notions of) ${\rm FO}^{\rm local}$-convergence and QF-convergence.
Theorem~\ref{thm:folimmap} is proved as a combination of general results about limit distributions (stated below as Theorem~\ref{thm:cmuc}, see \cite{CMUC}) and methods developed in \cite{modeling} for the construction of modeling limits of trees. As a consequence we are able to deduce the form of limits of mappings.

We shall also be interested in the inverse problems, which aim to determine which objects are $X$-limits of finite mappings (for a given fragment $X$ of first-order logic). It should be noticed that although the inverse problem for QF-limits of graphs or hypergraphs has been completely solved, the inverse problem for ${\rm FO}^{\rm local}$-limits of graphs with uniformly bounded degrees (or equivalently for ${\rm QF}$-limits of algebras with $d$ involutions, see Section ~\ref{sec:context}), which is known as Aldous--Lyons conjecture \cite{Aldous2006} is wide open. 

In our (restricted) setting of algebras with a single function symbol, we solve the inverse problem for ${\rm QF}$-limits. 
(Note that solving the inverse problem for ${\rm QF}$-limits of algebras with $2$ function symbols would imply solving the Aldous--Lyons conjecture.)
The solution of the inverse 
problems for ${\rm FO}^{\rm local}$-limits and ${\rm FO}$-limits
of mappings,  stated as Theorems~\ref{thm:invlocmap} and~\ref{thm:invfomap} in Section~\ref{sec:conc} will be proved in a forthcoming paper.

\section{Definitions and Notations}
\label{sec:def}
Recall that  a $\sigma$-structure $\mathbf A$ is defined by its {\em domain}  $A$, its {\em signature} $\sigma$ (which is a set of symbols of relations and functions together with their arities), and the interpretation  all the relations and functions  in $\sigma$ as relations and functions on $A$.

The structures we consider here are structures with signature $\sigma$ consisting of a single functional symbol $f$ and (possibly) some unary symbols $M_1,\dots,M_c$ (interpreted as a coloring). We call such structures {\em colored mappings} (or simply {\em mappings}).

Let $\mathbf F$ be such a mapping (with domain $F$). Then $f_{\mathbf F}$ is the interpretation of the symbol $f$ in $\mathbf F$ (thus $f_{\mathbf F}:F\rightarrow F$). 
For a first-order formula $\phi$ with $p$ free variables and a mapping $\mathbf F$ we define 
$$\phi(\mathbf F)=\{(v_1,\dots,v_p)\in F^p:\ \mathbf F\models\phi(v_1,\dots,v_p)\}.$$

If $\mathbf F$ is finite (meaning that $F$ is finite) we further define
$$
\langle\phi,\mathbf F\rangle=\frac{|\phi(\mathbf F)|}{|F|^p}.
$$
If the domain $F$ of $\mathbf F$ is a standard Borel space equipped with a (Borel) probability measure\footnote{Strictly speaking, this is not the same as a standard probability space (for which the probability measure is required to be complete).} $\nu_{\mathbf F}$ and if $\phi(\mathbf F)$ is a Borel subset of $F$ we define
$$
\langle\phi,\mathbf F\rangle=\nu_{\mathbf F}^{\otimes p}(\phi(\mathbf F)),
$$
where $\nu_{\mathbf F}^{\otimes p}$ stands for the product measure
$\overbrace{\nu_{\mathbf F}\otimes\dots\otimes\nu_{\mathbf F}}^{p  \text{ times}}$ on the power space $F^p$.

Recall that a measurable space $(F, \Sigma)$ is said to be {\em standard Borel} \cite{mackey1957borel} if there exists a metric on $F$ which makes it a complete separable metric space in such a way that $\Sigma$ coincides with the Borel $\sigma$-algebra (that is the smallest $\sigma$-algebra containing the open sets). Several very useful results hold for
standard Borel spaces (but do not hold in general). For instance, any bijective measurable mapping between standard Borel spaces is an isomorphism. In this paper, we only consider standard Borel spaces,  and {\em measurable} will mean {\em Borel measurable}, unless otherwise stated. For instance, the terms {\em measurable function} and {\em probability measure} will refer to {\em Borel measurable function} and {\em Borel  probability measure}. 

The mapping $\mathbf F$ is {\em Borel} if the function
$f_{\mathbf F}:F\rightarrow F$ is Borel (that is Borel measurable);
it is a {\em modeling mapping} if every first-order definable subset $\phi(\mathbf F)$ of a power of $F$ is Borel.
Note that every modeling mapping is Borel, as the graph of $f_{\mathbf F}$ is first-order definable.

Given two elements $u,v$ of $F$, we define the distance ${\rm dist}(x,y)$ as the minimum value $a+b$ such that $a,b\geq 0$ and
$f_{\mathbf F}^a(u)=f_{\mathbf F}^b(v)$. Note that ${\rm dist}(u,v)$ is exactly the graph distance between $u$ and $v$ in the {\em Gaifman graph} of $\mathbf F$ (that is in the graph with vertex set $F$ where two vertices $x,y$ are adjacent if either $x=f_{\mathbf F}(y)$ or $y=f_{\mathbf F}(x)$).
For $u\in F$ and $r\in\bbbn$ we denote by $B_r(\mathbf F,u)$ the {\em $r$-ball} of $u$ in $\mathbf F$, that is the set of all elements of $F$ at distance at most $r$ from $u$.

Let $\mathbf F$ be a modeling mapping and let $X$ be a  Borel subset of $F$ with positive measure.  We denote by $\restr{\mathbf F}{X}$
the restriction of $\mathbf F$, which is the modeling mapping with domain $X$, probability measure $\nu_{\restr{\mathbf F}{X}}=\frac{1}{\nu_{\mathbf F}(X)}\nu_{\mathbf F}$ and
$$
f_{\restr{\mathbf F}{X}}(v)=\begin{cases}
	f_{\mathbf F}(v)&\text{if }f_{\mathbf F}(v)\in X\\
	v&\text{otherwise}
\end{cases}
$$

A formula $\phi$ (with $p$ free variables) is {\em local} if its satisfaction only depends on a fixed $r$-neighborhood of its free variables.

We define the following fragments of the set of all first-order formulas:
\begin{itemize}
	\item ${\rm QF}$, the set of all quantifier free formulas;
	\item ${\rm FO}_0$, the set of all sentences (that is of all formulas without free variables);
	\item ${\rm FO}_1$, the set of all formulas with a single free variable;
	\item ${\rm FO}_1^{\rm local}$, the set of all local formulas with a single free variable;
	\item ${\rm FO}^{\rm local}$, the set of all local formulas;
	\item ${\rm FO}$, the set of all first-order formulas.
\end{itemize}

The idea to conceptualize limits of structures by means of convergence of the satisfaction probability of formulas in a fixed fragment of first-order logic has been introduced by two of the authors in 
\cite{CMUC}.
Given a fragment $X$ of first-order formulas among the above ones,
a sequence $(\mathbf A_n)_{n\in\bbbn}$ of structures is {\em $X$-convergent} (or {\em convergent} when $X$ is clear from the context) if, for every first-order formula $\phi\in X$  the probability $\langle\phi,\mathbf A_n\rangle$ that $\phi$ is satisfied in $\mathbf A_n$ for a random assignment of elements of $\mathbf A_n$ to the free variables of $\phi$ converges as $n$ grows to infinity.  In the particular case where $X={\rm FO}^{\rm local}$ we shall indiscriminately use the terms of ${\rm FO}^{\rm local}$-convergence and of {\em local convergence}.

This framework allows to consider limits of general combinatorial structures, and was applied to limits of sparse graphs with unbounded degrees \cite{limit1,modeling,gajarsky2016first}, 
  matroids \cite{Kardos2017150}, and tree semi lattices \cite{QFTSL-arxiv}. 
%
Moreover, if $\mathbf L$ is a Borel structure such that every $X$-definable subset of a power of $L$ is Borel we say that $\mathbf L$ is a {\em $X$-limit} of the sequence $(\mathbf A_n)_{n\in\bbbn}$ and we denote by
$\mathbf A_n\xrightarrow{X} \mathbf L$ the property that for every first-order formula $\phi\in X$ it holds
$$
\langle\phi,\mathbf L\rangle=\lim_{n\rightarrow\infty}\langle\phi,\mathbf A_n\rangle.
$$

Given a mapping $\mathbf F$ and a non-negative integer $k$, an element $x\in F$ is a {\em  $k$-cyclic element} of $\mathbf F$ if $f_{\mathbf F}^k(x)=x$ but
$f_{\mathbf F}^i(x)\neq x$ for every $0< i<k$; the element $x$ is a {\em cyclic element}  of $\mathbf F$ if it is $k$-cyclic for some non-negative integer $k$. We denote by $Z_k(\mathbf F)$ (resp. $Z(\mathbf F)$) the set of the $k$-cyclic elements (resp. cyclic elements) of $\mathbf F$:
\begin{align*}
Z_k(\mathbf L)&=\{v\in L: f_{\mathbf L}^k(v)=v\}\\
Z(\mathbf F)&=\bigcup_{k\geq 0}Z_k(\mathbf F).
\end{align*}

We consider a mapping modeling $\mathbf F$ with no atoms.
Its domain $F$ is partitioned into the countably many subsets 
$$F_i=\{x\in F: |f_{\mathbf F}^{-1}(x)|=i\}$$
for $i=0,1,\dots,$ and 
$$F_\infty=\{x\in F: |f_{\mathbf F}^{-1}(x)|=\infty\}.$$

\begin{definition}
\label{def:FMTP}
The {\em Finitary Mass Transport Principle} (FMTP) for $\mathbf F$
consists of the following two conditions:
\begin{itemize}
\item $\nu_{\mathbf F}(F_\infty)=0$;
	\item for every measurable subsets $A,B$ of $F\setminus F_\infty$ 
it holds

\begin{equation}
	\nu_{\mathbf F}(A\cap f_{\mathbf F}^{-1}(B))=
	\int_B|f_{\mathbf F}^{-1}(y)\cap A|\,{\rm d}\nu_{\mathbf F}(y)
\end{equation}
	\end{itemize}
\end{definition}
	
Note that a direct consequence of the FMTP is that for every measurable subset $A$ of $F$ it holds $\nu_{\mathbf F}(A)\geq \nu_{\mathbf F}(f_{\mathbf F}(A))$.

\section{Particular Types of Convergence}
\label{sec:context}

The notion of {\em left convergence} of Lov\'asz and Szegedy \cite{Lov'asz2006} is strongly related to important tools of extremal graph (and hypergraph) theory, like the Szemer\'edi regularity lemma and its extensions, or Razborov's flag algebra \cite{Razborov2007}. It is also strongly related to probability theory, and in particular to generalizations of de Finetti's theorem by Hoover \cite{Hoover1979} and Aldous \cite{Aldous1981,Aldous1985,AldousICM} (see also \cite{Austin2013}). In this setting, a convergent sequence of graphs admit as a limit a simple object, called {\em graphon}, which is a Lebesgue measurable function $W:[0,1]\times[0,1]\rightarrow [0,1]$ (defined up to measure preserving transformations). For $k$-regular hypergraphs, the limit object, called {\em hypergraphon} \cite{ElekSze} is a bit more complex (it is a measurable function from $[0,1]^{2^k-2}$ to $[0,1]$). Although this last case extends to  left limits of relational structures \cite{aroskar2014limits},  the case of structures with functional symbols remains open. In the graph (or hypergraph) setting, a sequence is left-convergent if the probability that $k$ random elements induce a specific substructure converges. In our interpretation (which we introduce in detail below), a sequence of structures is left-convergent (or ${\rm QF}$-convergent) if the probability of satisfaction of any fixed quantifier-free first-order formula by a random assignment of the free variables (drawn uniformly and independently at random) converges. (Here ${\rm QF}$ stands for Quantifier-Free.)

In this paper we consider limits of algebras (that is structures with functions but no relations), in the simplest case of a single unary operation. Note that algebras appear to be an unexpected bridge between the theory of left convergence and the theory of {\em local convergence} of bounded degree graphs of Benjamini and Schramm \cite{Benjamini2001}, as we shall see now (see also \cite{QFTSL-arxiv}).
In the setting of local convergence of bounded degree graphs introduced by Benjamini and Schramm, a sequence of (colored) graphs with maximum degree at most $d$ converges if, for every integer $r$, the distribution of the isomorphism type of the ball of radius $r$ rooted at a random vertex (drawn uniformly at random) converges. 
It is easily checked that in the context of (colored) graphs with bounded degrees, this is equivalent to ${\rm FO}^{\rm local}$-convergence \cite{CMUC}, which justifies using the name of ``local convergence'' for both notions.
The limit object of a local convergent sequence of graphs is a {\em graphing}, that is a graph on a standard Borel space, which satisfies a {\em  Mass Transport Principle}, which amounts to say that for every Borel subsets $A,B$ it holds
$$
\int_A{\rm deg}_B(v)\,{\rm d}v=\int_B{\rm deg}_A(v)\,{\rm d}v.
$$
This Mass Transport Principle is  similar to the Finitary Mass Transport Principle introduced in Definition~\ref{def:FMTP}, in that it expresses that the  edges between two sets $A$ and $B$ can be equally measured from $A$ and from $B$.

	An alternative description of a graphing is as follows: a graphing is defined by a finite number of measure preserving involutions $f_1,\dots,f_D$ on a standard Borel space, which define the edges of the graphing as the union of the orbits of size two of 
$f_1,\dots,f_D$. Using a proper edge coloring, every finite graph with maximum degree $d$ can be represented by means of $d+1$ involutions
$f_1,\dots,f_{d+1}$. It is easily seen that a sequence of properly $d$-edge colored graphs is local convergent if and only if the corresponding sequence of finite algebras with function symbols $f_1,\dots,f_d$ is QF-convergent. In this sense, QF-convergence of algebraic structures is  more general than local-convergence of relational structures with bounded degrees. (However, when considering the same type of structures for both modes of convergence, ${\rm FO}^{\rm local}$-convergence is stronger than QF-convergence.)


As we have seen above, the fragments ${\rm QF}$ (of quantifier-free formulas) and ${\rm FO}^{\rm local}$ (of local formulas) have a specific interest, which is also the case of the fragment ${\rm FO}$ of all first-order formulas. The first main result of this paper, which we prove in Section~\ref{sec:lim}, is the construction of a limit object for ${\rm FO}$-convergent sequence of mappings.

\begin{theorem}
\label{thm:folimmap}
Every  ${\rm FO}$-convergent sequence $(\mathbf F_n)_{n\in\bbbn}$ of finite mappings  (with $\lim_{n\rightarrow\infty} |F_n|=\infty$) has a modeling ${\rm FO}$-limit $\mathbf L$, such that
\begin{enumerate}
	\item the probability measure $\nu_{\mathbf L}$ is atomless;
	\item $\mathbf L$ satisfies the finitary mass transport principle;
		\item the complete theory of $\mathbf L$ has the finite model property.
\end{enumerate} 
\end{theorem}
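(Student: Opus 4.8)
The plan is to construct $\mathbf L$ explicitly from the structure theory of finite mappings and then read off the three properties. Recall that every finite mapping decomposes into connected components, each carrying a unique cycle with in-trees rooted on its cyclic vertices. A first obstacle is that the partition into recurrent (cyclic) and transient points is not first-order definable, since ``$\exists k\ge 1,\ f^k(x)=x$'' is an infinite disjunction; however, for each fixed $k$ the predicate ``$x$ lies on a cycle of length exactly $k$'' is quantifier-free definable, and more generally ``$x$ reaches a cycle of length $k$ after exactly $d$ steps'' and ``$x$ has exactly $j$ preimages satisfying $\phi$'' are first-order. I would therefore organise all the limit data through these finitary predicates rather than through the (non-definable) global decomposition.

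First I would record, for every first-order formula $\phi$, the limit $\langle\phi,\mathbf L\rangle:=\lim_n\langle\phi,\mathbf F_n\rangle$, which exists by hypothesis, and assemble the unary fragment into a Borel probability measure $\mu$ on the Stone space $S$ of complete $1$-types. Using the classification above, each $1$-type is described by (i) the cycle length reached along the forward orbit, a value in $\{1,2,\dots\}\cup\{\infty\}$, where $\infty$ records the types whose forward orbit never returns, which have no finite-mapping analogue and constitute the genuinely infinite part of the limit, (ii) the distance to that cycle, and (iii) the isomorphism type of the rooted backward tree together with the way these trees are attached along the cycle. I would show that $S$ with this coordinatisation is a standard Borel space and that $\mu$ disintegrates along the cycle-length coordinate.

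Next I would build the underlying space of $\mathbf L$ and the Borel endofunction. To force property (1), I realise each type over an atomless fibre: take the product of $S$ with $([0,1],\text{Lebesgue})$, spreading each positive-measure type over a copy of $[0,1]$, so that no point carries positive mass while the pushforward onto types is still $\mu$; the map $f$ is defined fibrewise from the forward-orbit coordinate, measure-preservingly on the cyclic part and following the tree edges on the transient part, and one checks it is Borel. The central verification, and the hard part, is that this object is a genuine modeling reproducing all first-order statistics: every first-order definable set must be $\nu_{\mathbf L}$-measurable, and one must prove $\langle\phi,\mathbf L\rangle=\lim_n\langle\phi,\mathbf F_n\rangle$ for formulas in arbitrarily many free variables, not merely for the unary generators used to define $\mu$. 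I would reduce the $k$-variable case to the $1$-variable case by observing that in a mapping the quantifier-free relationship between tuple entries is controlled entirely by comparisons of their forward orbits (when and where they coalesce), so a $k$-type factors into the $1$-types of its entries plus a finite amount of coalescence data, each piece being a limit of convergent finitary statistics.

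Finally the three properties follow. Property (1) is built in by the atomless-fibre construction. For property (2), any sentence $\psi$ in the complete theory of $\mathbf L$ satisfies $\langle\psi,\mathbf L\rangle=1$, hence $\langle\psi,\mathbf F_n\rangle\to1$; since a sentence takes value $0$ or $1$, $\mathbf F_n\models\psi$ for all large $n$, and the same argument applied to finite conjunctions shows every finite fragment of the theory has a finite model, i.e.\ the finite model property. For property (3), the mass transport identity counting edges of the functional graph between two definable sets holds in every $\mathbf F_n$; writing it with the finitary predicates ``exactly $j$ preimages in $A$'' makes both sides limits of convergent first-order statistics, so the identity passes to $\mathbf L$. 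I expect the disintegration of $\mu$ and the measurability and compatibility check of the third paragraph to be the main technical burden, with the $\infty$-cycle-length part requiring the most care.
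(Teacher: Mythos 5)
Your plan has a genuine gap at its core: the object you build in your third paragraph is manufactured from the measure $\mu$ on complete $1$-types alone (the product $S\times[0,1]$, with $f$ defined fibrewise from the type coordinate), but the ${\rm FO}$-limit of a sequence of mappings is \emph{not} determined by $\mu$. Concretely, let $\mathbf A_n$ be the disjoint union of $\sqrt{n}$ stars, each on $\sqrt{n}$ vertices (all leaves mapped to the center, the center a fixed point), and let $\mathbf B_n$ be the disjoint union of one star on $n$ vertices with $\sqrt{n}$ stars on $\sqrt{n}$ vertices. For any fixed quantifier rank $q$, once all star sizes and the number of components exceed the Ehrenfeucht--Fra\"iss\'e threshold for $q$, a leaf of the huge star and a leaf of a small star realize the same rank-$q$ type, and the two structures are rank-$q$ equivalent; hence both sequences converge to the \emph{same} limit measure on complete $1$-types (almost surely the type of a leaf above a fixed point with $\geq k$ preimages for every $k$, in a model of the common limit theory). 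Yet $\langle f(x_1)=f(x_2),\mathbf A_n\rangle\to 0$ while $\langle f(x_1)=f(x_2),\mathbf B_n\rangle\to 1/4$, so the two sequences have different limits already for a quantifier-free formula with two variables. Equivalently: any limit of $(\mathbf B_n)$ must have a pushforward measure $f_*\nu_{\mathbf L}$ with an atom of mass $1/2$, while any limit of $(\mathbf A_n)$ must have atomless pushforward; no recipe that defines $f$ fibrewise from the $1$-type data can produce both. This also undercuts your reduction in the fourth paragraph: it is true that a $k$-type factors into $1$-types plus coalescence data, but the \emph{joint distribution} of that coalescence data over independent samples is extra information, not determined by the $1$-type marginal, so matching the multi-variable statistics is not a verification to be performed after the construction --- it is input data that the construction of $f$ must be engineered to realize, and that is precisely the hard part. (A secondary issue of the same nature: points of $\mu$-measure-zero type, such as cycle vertices and star centers, must nevertheless be present in $L$ for $f$ to be total and for the theory to come out right; the product-with-$[0,1]$ picture does not provide them.)

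For contrast, the paper does not attempt this direct construction. It encodes mappings as outdegree-$\le 1$ digraphs and invokes the reduction theorem (Theorem~\ref{thm:reduc}, from \cite{NPOM2arxiv}) to split the problem into two regimes where the pair-correlation phenomenon above is tamed: residual sequences (every ball has vanishing measure, so distinct random points almost surely lie ``far apart'' and $k$-statistics really do factor through $1$-types) and $\rho$-non-dispersive rooted sequences (everything concentrates around one root); in each regime it then reuses the existing, and technically heavy, construction of modeling ${\rm FO}$-limits for rooted trees/forests, handling the unique directed cycle by deleting one of its edges, marking its endpoints, and restoring it afterwards by a basic interpretation. Your arguments for properties (2) and (3) --- sentences take values in $\{0,1\}$ so the limit theory is finitely satisfiable, and the mass-transport identities pass to the limit through the predicates ``exactly $j$ preimages in a definable set'' --- are correct and essentially the intended ones; the missing content of the proof is entirely in the existence of the modeling itself.
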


Three conditions appear in this theorem:
\begin{enumerate}
	\item The measure $\nu_{\mathbf L}$ is {\em atomless}  if for every $v\in L$ it holds $\nu_{\mathbf L}(\{v\})=0$. The necessity of this condition is witnessed by the formula $x_1=x_2$, as $\langle x_1=x_2,\mathbf F\rangle=1/|F|$ holds
for every finite mapping $\mathbf F$. This conditions is thus required as soon as we consider ${\rm QF}$-convergence.
	\item The  finitary mass transport principle (see Definition ~\ref{def:FMTP}), which can be 
	rewritten as follows:  for every Borel subsets $X,Y$ of $L$ and every positive integer $k$  it holds 
\begin{align*}
	(\forall v\in Y)\,|f^{-1}(v)\cap X|=k\quad&\Rightarrow\quad \nu_{\mathbf L}(f^{-1}(Y)\cap X)=k\nu_{\mathbf L}(Y),\\
	(\forall v\in Y)\,|f^{-1}(v)\cap X|>k\quad&\Rightarrow\quad \nu_{\mathbf L}(f^{-1}(Y)\cap X)>k\nu_{\mathbf L}(Y).\\
\end{align*}
This condition is thus required (for definable subsets $X$ and $Y$) as soon as we consider local-convergence.
	\item The {\em complete theory} ${\rm Th}(\mathbf L)$ of $\mathbf L$ is the set of all (first-order) sentences satisfied by $\mathbf L$. This complete theory has the 	 {\em finite model property} (FMP) if for every sentence $\theta\in{\rm Th}(\mathbf L)$ (i.e. for every sentence $\theta$ satisfied by $\mathbf L$) there exists a finite mapping $\mathbf F$ that satisfies $\theta$. This is indeed a necessary condition for $\mathbf L$ to be an elementary limit of finite mappings hence necessary as soon as we consider
	${\rm FO}$-convergence.
\end{enumerate}

As a corollary of Theorem~\ref{thm:folimmap} we deduce (by compactness) the following corollaries.
\begin{corollary}
\label{cor:local}
Every  ${\rm FO}^{\rm local}$-convergent sequence $(\mathbf F_n)_{n\in\bbbn}$ of finite mappings  (with $\lim_{n\rightarrow\infty} |F_n|=\infty$) has a modeling ${\rm FO}^{\rm local}$-limit $\mathbf L$, such that
\begin{enumerate}
	\item the probability measure $\nu_{\mathbf L}$ is atomless;
	\item $\mathbf L$ satisfies the finitary mass transport principle.
\end{enumerate} 	
\end{corollary}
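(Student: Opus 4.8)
The plan is to reduce to Theorem~\ref{thm:folimmap} by the compactness argument already announced in the text: from the given ${\rm FO}^{\rm local}$-convergent sequence I would extract an ${\rm FO}$-convergent subsequence, apply the theorem to it, and observe that passing to the subsequence leaves every local limit untouched. Concretely, since the language of mappings is finite, the set of first-order formulas (enumerated up to syntactic identity) is countable; write it $\phi_1,\phi_2,\dots$. For each $n$ the map $\phi_i\mapsto\langle\phi_i,\mathbf F_n\rangle$ is a point of the compact metrizable cube $[0,1]^{\bbbn}$, so by sequential compactness there is a subsequence $(\mathbf F_{n_k})_{k\in\bbbn}$ along which $\langle\phi_i,\mathbf F_{n_k}\rangle$ converges for every $i$; equivalently, $(\mathbf F_{n_k})_k$ is ${\rm FO}$-convergent. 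As $\lim_n|F_n|=\infty$, the same divergence of sizes holds for the subsequence.

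The second step is to check that this extraction does not alter the local limit. Because the original sequence is already ${\rm FO}^{\rm local}$-convergent, for every local formula $\psi$ the sequence $(\langle\psi,\mathbf F_n\rangle)_n$ converges; hence every subsequence converges to the same value, and in particular $\langle\psi,\mathbf F_{n_k}\rangle\to\lim_n\langle\psi,\mathbf F_n\rangle$. Consequently, any ${\rm FO}^{\rm local}$-limit of $(\mathbf F_{n_k})_k$ is automatically an ${\rm FO}^{\rm local}$-limit of the whole sequence $(\mathbf F_n)_n$.

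Now I would apply Theorem~\ref{thm:folimmap} to the ${\rm FO}$-convergent subsequence $(\mathbf F_{n_k})_k$, obtaining a modeling mapping $\mathbf L$ which is an ${\rm FO}$-modeling-limit of this subsequence and satisfies atomlessness of $\nu_{\mathbf L}$, the finite model property, and the finitary mass transport principle. Since an ${\rm FO}$-limit is a fortiori an ${\rm FO}^{\rm local}$-limit, the second step promotes $\mathbf L$ to an ${\rm FO}^{\rm local}$-modeling-limit of the original sequence, so conclusions (1) and (2) of the corollary hold verbatim. The finite model property is in fact retained as well, but is deliberately not claimed here, reflecting that the complete ${\rm FO}$-theory of $\mathbf L$ depends on the chosen subsequence and is not determined by ${\rm FO}^{\rm local}$-convergence alone.

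I expect no substantial obstacle beyond Theorem~\ref{thm:folimmap} itself: the compactness extraction is standard and uses nothing about mappings other than countability of the formula set, and the only delicate point—the interchange between ``limit of the subsequence'' and ``limit of the sequence''—is harmless precisely because the original sequence already converges on every local formula.
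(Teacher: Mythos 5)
Your proposal is correct and follows exactly the route the paper intends: the corollary is deduced from Theorem~\ref{thm:folimmap} ``by compactness,'' i.e.\ by extracting an ${\rm FO}$-convergent subsequence via a diagonal/compactness argument on $[0,1]^{\bbbn}$, applying the theorem, and noting that the resulting limit remains an ${\rm FO}^{\rm local}$-limit of the original sequence since that sequence already converges on every local formula. Nothing further is needed.
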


\begin{corollary}
\label{cor:QF}
Every  ${\rm QF}$-convergent sequence $(\mathbf F_n)_{n\in\bbbn}$ of finite mappings  (with $\lim_{n\rightarrow\infty} |F_n|=\infty$) has a Borel mapping limit $\mathbf L$ such that
\begin{enumerate}
	\item the probability measure $\nu_{\mathbf L}$ is atomless;
	\item for every Borel subset $X$ of $Z(\mathbf L)$ it holds $\nu_{\mathbf L}(f_{\mathbf L}(X))=\nu_{\mathbf L}(X)$.
\end{enumerate} 	
\end{corollary}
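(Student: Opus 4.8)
The plan is to deduce this statement from Theorem~\ref{thm:folimmap} by a compactness argument, the only subtlety being that ${\rm QF}$-convergence is weaker than ${\rm FO}$-convergence and so does not directly feed the theorem. First I would invoke the compactness of the space of ${\rm FO}$-limits: in the structural limit framework every finite structure is encoded by a probability measure on the Stone space of the Lindenbaum--Tarski algebra of first-order formulas, and this space of measures is (weak-$*$) compact and metrizable, since the formula algebra is countable. Hence from the given sequence $(\mathbf F_n)_{n\in\bbbn}$ one may extract an ${\rm FO}$-convergent subsequence $(\mathbf F_{n_k})_{k\in\bbbn}$. Because the original sequence is already ${\rm QF}$-convergent and ${\rm QF}\subseteq{\rm FO}$, this subsequence has, for every quantifier-free formula $\phi$, the same limit value $\lim_k\langle\phi,\mathbf F_{n_k}\rangle=\lim_n\langle\phi,\mathbf F_n\rangle$; so any ${\rm FO}$-limit of the subsequence is in particular a ${\rm QF}$-limit of $(\mathbf F_n)_{n\in\bbbn}$.

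Applying Theorem~\ref{thm:folimmap} to the subsequence $(\mathbf F_{n_k})_{k\in\bbbn}$ produces a modeling mapping $\mathbf L$ that is, by the previous paragraph, a ${\rm QF}$-limit of the original sequence, and which satisfies the three listed conditions. A modeling mapping is in particular a Borel mapping, and condition~(1) of the corollary (atomlessness of $\nu_{\mathbf L}$) is precisely condition~(1) of the theorem, so nothing remains to be checked there. For condition~(2) I would use the finitary mass transport principle (condition~(3) of the theorem) on the recurrent part $Z(\mathbf L)$. Here I rely on the structural description of modeling mappings: $Z(\mathbf L)$ is Borel and $f_{\mathbf L}$ restricts to a Borel bijection of $Z(\mathbf L)$ onto itself, so that every $v\in Z(\mathbf L)$ has exactly one $f_{\mathbf L}$-preimage inside $Z(\mathbf L)$.

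With these facts, the computation is short. Taking $X=Z(\mathbf L)$, an arbitrary Borel $Y\subseteq Z(\mathbf L)$, and $k=1$, the hypothesis $|f_{\mathbf L}^{-1}(v)\cap Z(\mathbf L)|=1$ holds for every $v\in Y$, so the mass transport principle yields $\nu_{\mathbf L}(f_{\mathbf L}^{-1}(Y)\cap Z(\mathbf L))=\nu_{\mathbf L}(Y)$. Given a Borel $X\subseteq Z(\mathbf L)$, set $Y=f_{\mathbf L}(X)$; injectivity of $f_{\mathbf L}$ on $Z(\mathbf L)$ gives $f_{\mathbf L}^{-1}(Y)\cap Z(\mathbf L)=X$, whence $\nu_{\mathbf L}(X)=\nu_{\mathbf L}(f_{\mathbf L}(X))$, which is condition~(2). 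The main obstacle is not in this last calculation but in justifying the structural inputs used above --- that $Z(\mathbf L)$ is Borel and that $f_{\mathbf L}$ restricts to a bijection of $Z(\mathbf L)$ --- which is where the description of the shape of the limit object, established earlier, does the real work; the compactness extraction and the conversion between the preimage form of the mass transport principle and the image form of measure preservation are then routine.
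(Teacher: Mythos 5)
Your proposal is correct and takes essentially the same route as the paper: extract an ${\rm FO}$-convergent subsequence by compactness, apply Theorem~\ref{thm:folimmap} to it, and combine the finitary mass transport principle with the fact that $f_{\mathbf L}$ restricts to a bijection of the cyclic part $Z(\mathbf L)$. The only step worth making explicit (needed so that the mass transport principle can be applied to $Y=f_{\mathbf L}(X)$) is why $f_{\mathbf L}(X)$ is Borel; as in the paper, on each $Z_k(\mathbf L)$ the inverse of $f_{\mathbf L}$ is $f_{\mathbf L}^{k-1}$, so $f_{\mathbf L}(X)=\bigcup_{k} f_{\mathbf L}^{-(k-1)}(X\cap Z_k(\mathbf L))\cap Z_k(\mathbf L)$ is a countable union of Borel sets.
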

Theorem~\ref{thm:folimmap} is be proved as a combination of general results about limit distributions (stated below as Theorem~\ref{thm:cmuc}, see \cite{CMUC}) and methods developed in \cite{modeling} for the purpose of graph-trees. As a consequence, in the case of mappings we are able to deduce the form of limits.
We shall also be interested in the inverse problem, which aim to determine which objects are limits of finite mappings (for a given type of convergence). It should be noticed that although the inverse problem for left limits of graphs or hypergraphs has been completely solved, the inverse problem for local limits of graphs with uniformly bounded degrees (or equivalently for ${\rm QF}$-limits of algebras with $d$ involutions), which is known as the Aldous--Lyons conjecture \cite{Aldous2006} is wide open. Note that a positive solution to this inverse problem would have far-reaching consequences, by proving that all finitely generated groups are sofic (settling a question by Weiss \cite{Weiss2000}), the direct finiteness conjecture of Kaplansky \cite{kaplansky1972fields} on group algebras, a conjecture of Gottschalk \cite{Gottschalk1973} on surjunctive groups in topological dynamics, the Determinant Conjecture on Fuglede-Kadison determinants, and Connes' Embedding Conjecture for group von Neumann algebras \cite{Connes1976}.

In our (restricted) setting of algebras with a single function symbol, we solve the inverse problem for ${\rm QF}$-limits (Theorem~\ref{thm:invqfmap}). The solution of  the inverse 
problems for local-limits and ${\rm FO}$-limits,  stated as Theorems~\ref{thm:invlocmap} and~\ref{thm:invfomap} in Section~\ref{sec:conc} will be proved in a forthcoming paper \cite{MapApprox}.

\begin{theorem}
\label{thm:invqfmap}
Let $\mathbf L$ be an atomless Borel mapping.

If every measurable subset $X$ of  the set  $Z(\mathbf L)$ of all the cyclic elements of $\mathbf L$ is such that $\nu_{\mathbf L}(f_{\mathbf L}(X))=\nu_{\mathbf L}(X)$ holds, then $\mathbf L$ is the ${\rm QF}$-limit of a ${\rm QF}$-convergent sequence $(\mathbf F_n)_{n\in\bbbn}$ of finite mappings.
\end{theorem}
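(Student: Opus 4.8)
The plan is to build the finite mappings $\mathbf F_n$ by discretizing $\mathbf L$, treating its recurrent backbone $Z(\mathbf L)$ and its transient forest separately, and to conclude by diagonalization. Since there are only countably many quantifier-free formulas and each involves finitely many variables and iterates of $f_{\mathbf L}$, it suffices to show that for every $N$ and every $\varepsilon>0$ there is a single finite mapping $\mathbf F$ with $|\langle\phi,\mathbf F\rangle-\langle\phi,\mathbf L\rangle|<\varepsilon$ for all quantifier-free $\phi$ using at most $N$ variables and only the iterates $f^0,\dots,f^N$; a standard diagonalization over $N$ then produces the sequence, and convergence of every fixed formula follows since each has bounded complexity and is eventually matched. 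Fix $N$ and $\varepsilon$. The quantifier-free $N$-type of a tuple is determined by the pattern of the equalities $f^i(x_a)=f^j(x_b)$ with $i,j\le N$, so the target statistics are governed by, for each tuple, the distances of its entries to the cycles they eventually enter, the lengths of those cycles, and the \emph{cross-coincidences} $f^i(x_1)=f^j(x_2)$ between distinct entries, whose probability for independent $x_1,x_2$ equals $\sum_c\mu_i(\{c\})\,\mu_j(\{c\})$, where $\mu_i:=(f_{\mathbf L}^{\,i})_*\nu_{\mathbf L}$ and $c$ ranges over the atoms of these pushforwards.

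First I would treat the recurrent part $Z=Z(\mathbf L)$. The hypothesis $\nu_{\mathbf L}(f_{\mathbf L}(X))=\nu_{\mathbf L}(X)$ for all measurable $X\subseteq Z$ forces $\restr{f_{\mathbf L}}{Z}$ to be injective modulo a null set and measure preserving, hence a measure preserving Borel automorphism of $(Z,\restr{\nu_{\mathbf L}}{Z})$. Decompose $Z$ into its period-$\ell$ parts $P_\ell$ (points of minimal period $\ell$) for $\ell\le N$, together with the part $P_{>N}$ on which no iterate $f^1,\dots,f^N$ fixes a set of positive measure. Using the classical finite approximation of measure preserving automorphisms (Rokhlin's lemma), I would realize $\restr{f_{\mathbf L}}{Z}$ on a finite set $\widehat Z$ carrying the uniform measure by a permutation $\sigma$ whose cycles are $\ell$-cycles of total relative mass $\approx\nu_{\mathbf L}(P_\ell)/\nu_{\mathbf L}(Z)$ for $\ell\le N$, and cycles of length exceeding $N$ for the mass of $P_{>N}$. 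It is precisely the measure preservation hypothesis that makes this backbone realizable by genuine permutations.

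Next I would attach to this backbone a finite forest reproducing the transient part $T=L\setminus Z$, where the only constraint is matching the mass-flow: orbits of points of $T$ move forward and may merge, so the relevant data are the collapsing pushforward measures $\mu_0,\dots,\mu_N$ on $T$, satisfying $\mu_{i+1}=(f_{\mathbf L})_*\mu_i$. The atoms of $\mu_i$ are the \emph{confluence points} carrying positive $i$-step in-mass, while the non-atomic part of each $\mu_i$ must, in the finite model, be spread over targets each receiving $o(1)$ relative mass, so that it contributes nothing to cross-coincidences. Concretely, I would sample enough points to estimate, within $\varepsilon$, the mass of each $N$-type class (including its distance profile to $\widehat Z$ and its landing cell), and then build a finite rooted forest whose vertices route the prescribed amount of mass into each confluence point at each level and into the correct cells of $\widehat Z$. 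Realizing a consistent collapsing sequence $(\mu_i)_{i\le N}$ by a finite forest is a rounding/bin-packing task; the atomless hypothesis guarantees that remainders split into arbitrarily small equal cells, which both keeps $\langle x_1=x_2,\mathbf F\rangle=1/|F|\to0$ and lets the empirical type frequencies match the targets.

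The principal obstacle is the simultaneous and recursive consistency of these two constructions inside one finite mapping: each vertex must have exactly one image, the transient forest must plug into the permutation backbone at the correct cells, and the level-by-level mass flow must reproduce \emph{all} the atomic masses of $\mu_0,\dots,\mu_N$ (hence all cross-coincidence probabilities) at once, while the uniform measure on the whole vertex set approximates $\nu_{\mathbf L}$. I expect the delicate point to be the bookkeeping that the finite in-neighbourhood sizes chosen to match the cross-coincidence statistics remain compatible with forward-functionality and with the normalization, uniformly as $N\to\infty$. Once a single $(N,\varepsilon)$-model is produced, the diagonalization yields $\langle\phi,\mathbf F_n\rangle\to\langle\phi,\mathbf L\rangle$ for every quantifier-free $\phi$, which is exactly the assertion that $\mathbf L$ is the ${\rm QF}$-limit of the $\mathbf F_n$.
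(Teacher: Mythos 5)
Your plan correctly isolates the invariants that govern quantifier-free statistics of a mapping --- the periodic decomposition of $Z(\mathbf L)$, the pushforwards $\mu_i=(f_{\mathbf L}^{\,i})_*\nu_{\mathbf L}$, their atoms, and the cross-coincidence identity $\Pr[f^i(x_1)=f^j(x_2)]=\sum_c\mu_i(\{c\})\mu_j(\{c\})$ --- and your route (Rokhlin-type discretization of the measure-preserving automorphism $\restr{f_{\mathbf L}}{Z}$, plus a directly built finite forest for the transient part) is genuinely different from the paper's. But as written it is not a proof: the step you yourself label ``the principal obstacle'' --- realizing, inside a single finite mapping with \emph{uniform} measure, all type masses, all atomic in-masses of $\mu_0,\dots,\mu_N$, and the attachment of the forest to the permutation backbone, simultaneously and consistently with forward-functionality --- is precisely the content of the theorem, and you leave it as an expectation about ``bookkeeping'' rather than carrying it out. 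A concrete place where the two halves of your construction couple and the decomposition breaks: an atom $c$ of some $\mu_i$ may lie \emph{on} $Z(\mathbf L)$ (a positive mass of transient points funnelling into one cyclic point). Your Rokhlin approximation treats $(Z,\restr{f_{\mathbf L}}{Z})$ as an abstract measure-preserving automorphism and forgets which cyclic points are such atoms, so the forest cannot be ``plugged into the correct cells'' without redoing the cyclic discretization relative to the (countably many, to-be-truncated) marked atoms and their orbits; none of this is addressed.

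For comparison, the paper dodges this entire simultaneous-realization problem by importing it: Lemma~\ref{lem:samp} (a sampling result from \cite{QFTSL-arxiv}, resting on McDiarmid's concentration inequality) produces in one stroke a finite \emph{weighted} mapping matching every QF formula of bounded complexity within $\epsilon$. The only difficulty that remains is converting weights into uniform measure by blowing each vertex into copies; this is obstructed exactly on short cycles, where functionality of the blown-up map forces the weights to be constant along each cycle. That is fixed by Lemma~\ref{lem:QFreg}, an averaging argument over the finite QF-definable group generated by the cycle rotations $\zeta_k$ --- and this is the (only) point where the hypothesis $\nu_{\mathbf L}(f_{\mathbf L}(X))=\nu_{\mathbf L}(X)$ on $Z(\mathbf L)$ enters, as $\Gamma$-invariance of $\nu_{\mathbf L}$, not via any ergodic-theoretic approximation. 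So to rescue your approach you would need a self-contained substitute for Lemma~\ref{lem:samp} together with a resolution of the cycle-consistency issue; alternatively, note that once you invoke a sampling lemma of that strength, your separate treatment of $Z(\mathbf L)$ and the transient forest becomes unnecessary.
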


Theorem~\ref{thm:invqfmap} is proved in Section~\ref{sec:approx} by a mix of three techniques:

\begin{enumerate}
	\item sampling, 
\begin{center}
\begin{minipage}{.35\textwidth}
	\includegraphics[width=\textwidth]{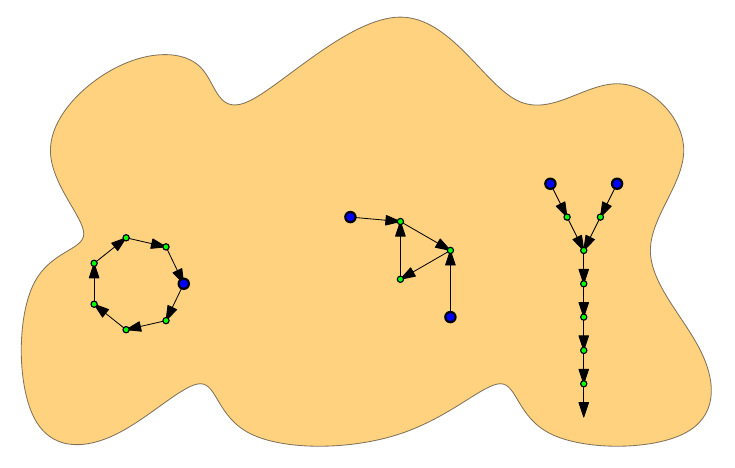}
\end{minipage}
$\longrightarrow$
\begin{minipage}{.35\textwidth}
	\includegraphics[width=\textwidth]{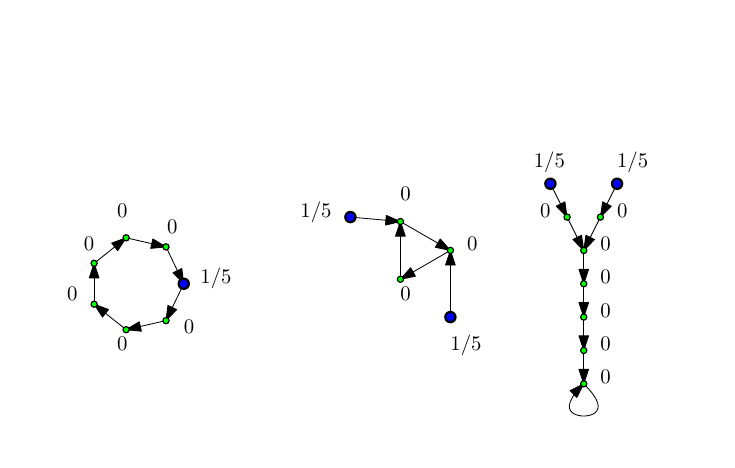}
\end{minipage}
\end{center}
	\item uniformization, 
\begin{center}
\begin{minipage}{.35\textwidth}
	\includegraphics[width=\textwidth]{QF1}
\end{minipage}
$\longrightarrow$
\begin{minipage}{.35\textwidth}
	\includegraphics[width=\textwidth]{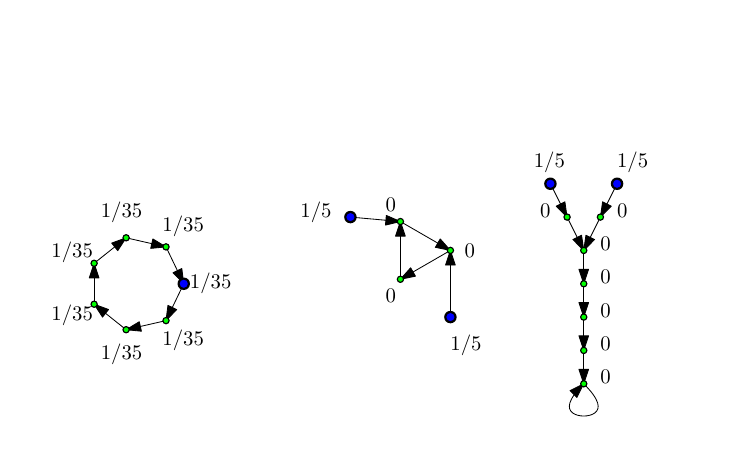}
\end{minipage}
\end{center}
	\item and blowing.
\begin{center}
\begin{minipage}{.35\textwidth}
	\includegraphics[width=\textwidth]{QF2}
\end{minipage}
$\longrightarrow$
\begin{minipage}{.35\textwidth}
	\includegraphics[width=\textwidth]{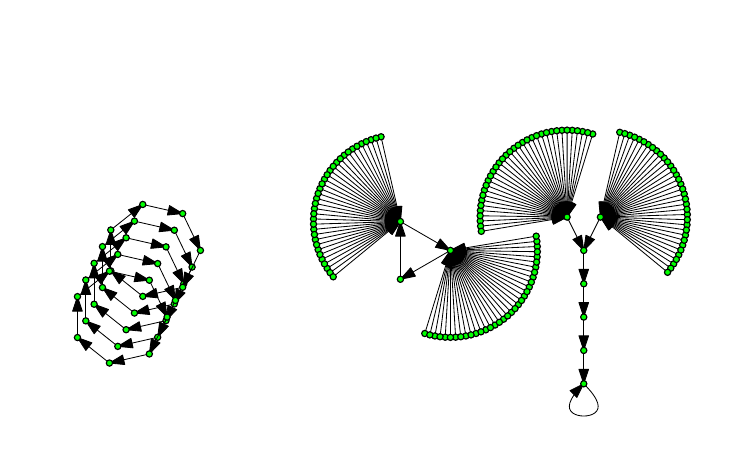}
\end{minipage}
\end{center}
\end{enumerate}

\section{Limits of Mappings}
\label{sec:lim}
For general structures, we have the following general analytic representation theorem, which can be seen as an extension of the representation of left limits of dense graphs by infinite exchangeable graphs \cite{AldousICM,Hoover1979,Kallenberg2005} and of  local limits of graphs with bounded degrees by unimodular distributions \cite{Benjamini2001}:
\begin{theorem}[\cite{CMUC}]
\label{thm:cmuc}
Let $S$ be the Stone dual of the Lindenbaum-Tarski algebra $\mathcal B$ defined by a fragment $X$ of first-order formulas, and let $\Gamma$ be the group of automorphisms of $\mathcal B$ generated by the permutations of the free variables (which naturally acts on $S$). 
For each formula $\phi\in X$, we denote by $I_\phi$ the indicator function of the clopen subset of $S$ dual to $\phi$.

To each finite $\sigma$-structure $\mathbf{A}$ corresponds (injectively) a $\Gamma$-invariant probability measure $\mu_{\mathbf{A}}$ on $S$ such that for every formula $\phi\in X$ it holds
$$
\langle\phi,\mathbf A\rangle=\int_S I_\phi(T)\,{\rm d}\mu_{\mathbf{A}}(T),
$$

For every sequence
$(\mathbf{A}_n)_{n\in\Nat}$ of $\sigma$-structures the sequence $(\mathbf{A}_n)_{n\in\Nat}$ is $X$-convergent if and only if the measures 
$\mu_{\mathbf{A}_n}$ converge weakly. Moreover,
if the sequence $(\mathbf{A}_n)_{n\in\Nat}$ is $X$-convergent
 then the measures 
$\mu_{\mathbf{A}_n}$ converge
to some $\Gamma$-invariant probability measure $\mu$ with the property that for every formula $\phi\in X$ it holds
$$
\lim_{n\rightarrow\infty}\langle\phi,\mathbf{A}_n\rangle=\int_S I_\phi(T)\,{\rm d}\mu(T).
$$
\end{theorem}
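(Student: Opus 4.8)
The plan is to realize the statement as an instance of Stone duality combined with the Riesz representation theorem, so that $X$-convergence becomes, literally, weak convergence of measures on a compact space. First I would set up the topological skeleton. Since the signature is finite and $X$ is a countable set of formulas, the Lindenbaum--Tarski algebra $\mathcal B$ is a countable Boolean algebra, and its Stone dual $S$ (the space of ultrafilters of $\mathcal B$, equivalently the space of complete $X$-types) is a compact, totally disconnected, \emph{metrizable} Hausdorff space; metrizability is exactly the countability of a clopen base and will be needed for the sequential formulation of weak convergence. Under the duality each $\phi\in X$ corresponds to a clopen set $K_\phi\subseteq S$ with indicator $I_\phi$, Boolean operations on formulas match set operations on the $K_\phi$, and the family of clopen sets separates the points of $S$.

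Next I would attach a measure to each finite structure. To an assignment $\bar a=(a_1,a_2,\dots)$ of elements of $A$ to the variables one associates the point $T_{\bar a}\in S$ given by the complete type $\{\phi\in X:\mathbf A\models\phi(\bar a)\}$; drawing the $a_i$ independently and uniformly from $A$ and pushing this distribution forward along $\bar a\mapsto T_{\bar a}$ defines $\mu_{\mathbf A}$. Concretely this prescribes $\mu_{\mathbf A}(K_\phi)=\langle\phi,\mathbf A\rangle$ on clopen sets, and I would check that this is a well-defined finitely additive set function, using that $\phi\mapsto K_\phi$ is a Boolean homomorphism and that $\langle\cdot,\mathbf A\rangle$ respects negation and (disjoint) disjunction. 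The identity $\langle\phi,\mathbf A\rangle=\int_S I_\phi\,{\rm d}\mu_{\mathbf A}$ is then immediate from $I_\phi=\mathbf 1_{K_\phi}$, and $\Gamma$-invariance follows because the i.i.d.\ uniform law on assignments is invariant under permuting the variables. Injectivity of $\mathbf A\mapsto\mu_{\mathbf A}$ I would record by noting that the numbers $\langle\phi,\mathbf A\rangle$ determine $\mathbf A$ (for $X={\rm FO}$, up to isomorphism; and in any case $x_1=x_2$ already returns $1/|A|$, fixing the cardinality).

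The convergence statement is then pure functional analysis. The key observation is that the finite linear span of the $I_\phi$ is a point-separating, unital subalgebra of $C(S)$ (closed under products because $I_\phi I_\psi=I_{\phi\wedge\psi}$), hence dense by the Stone--Weierstrass theorem. Consequently $(\langle\phi,\mathbf A_n\rangle)_n=(\int I_\phi\,{\rm d}\mu_{\mathbf A_n})_n$ converges for every $\phi\in X$ if and only if $\int g\,{\rm d}\mu_{\mathbf A_n}$ converges for every $g\in C(S)$, i.e.\ if and only if $(\mu_{\mathbf A_n})_n$ converges weakly. Since $S$ is compact, the set of Borel probability measures on $S$ is weak-$*$ compact (Banach--Alaoglu with Riesz representation), so the limit functional $g\mapsto\lim_n\int g\,{\rm d}\mu_{\mathbf A_n}$, being positive of norm one and sending $1$ to $1$, is represented by a probability measure $\mu$. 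Passing to the limit in $\int I_\phi\,{\rm d}\mu_{\mathbf A_n}$ yields the final displayed formula, and $\Gamma$-invariance is preserved because it is a closed condition under weak convergence; this gives both directions of the equivalence and the ``moreover'' clause at once.

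I expect the main technical obstacle to be the passage from a finitely additive set function on the clopen algebra to a genuine countably additive Borel probability measure, together with the measurability of $\bar a\mapsto T_{\bar a}$. The clean resolution uses the compactness of $S$: a countable disjoint union $\bigsqcup_n C_n=C$ of clopen sets that is itself clopen must reduce to finitely many nonempty $C_n$, so finite additivity on clopens already meets the hypotheses of the Carath\'eodory extension theorem and extends uniquely to a Radon measure. The only remaining care is bookkeeping about free variables: each $\phi\in X$ involves finitely many variables, so $\mu_{\mathbf A}$ is determined on the generating clopen sets and no genuine issue of infinitely many coordinates arises.
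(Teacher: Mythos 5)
The paper does not prove Theorem~\ref{thm:cmuc} itself but quotes it from \cite{CMUC}, whose argument is exactly the route you take: push forward the i.i.d.\ uniform law on assignments to the Stone space of the Lindenbaum--Tarski algebra (equivalently, extend the finitely additive set function $\phi\mapsto\langle\phi,\mathbf A\rangle$ from the clopen algebra, where compactness trivializes countable additivity), then use Stone--Weierstrass density of the span of the $I_\phi$ in $C(S)$ together with weak-$*$ compactness and Riesz representation to identify $X$-convergence with weak convergence and to produce the $\Gamma$-invariant limit measure. Your proposal is correct and essentially the same proof, so there is nothing further to compare.
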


We take time to comment on Theorem~\ref{thm:cmuc}. Given a fixed finite (or countably infinite) signature $\sigma$, the {\em Stone dual} $S$ of the Lindenbaum-Tarski Boolean algebra defined by a fragment $X$ of first-order formulas in the language of $\sigma$ is a Polish space (that is a separable completely metrizable topological space) $S$, such that
\begin{itemize}
	\item a point $\mathbf t$ of $S$ is a maximal consistent sets of formulas in $X$;
	\item a basis of the topology of $S$ is given by the family of all clopen sets $K(\phi)$  (for $\phi\in X$), where $K(\phi)$ is the set of all points $\mathbf t$ containing the formula $\phi$ (that is $K(\phi)$ is the clopen subset of $S$ dual to $\phi$).
\end{itemize}

The Stone duals associated to specific fragments of first-order logic can be given a simple description (see \cite{CMUC}):
\begin{itemize}
	\item If $X={\rm QF}$ then $S$ is the space of all $\sigma$-structures with (same) domain $\mathbb N$. In this case, $\Gamma$ is the permutation group $S_\omega$, whose action on $S$ follows from its action on $\mathbb N$, and $\Gamma$-invariant probability measures on $S$ are called {\em exchangeable random $\sigma$-structures}.
	\item if $X={\rm FO}_1^{\rm local}$ then $S$ is the space of all (elementary equivalence classes of)  connected countable rooted $\sigma$-structures. In this case, $\Gamma$ is trivial.
\end{itemize}

It is not known in general which (full) ${\rm FO}$-limits can be represented by measurable structures built on a standard probability space, in a similar way that the local limit of graphs with bounded degrees can be represented by means of a  graphing. Some partial results are known and it has been conjectured by the authors that such a representation exists for every convergent sequence of graphs within a nowhere dense class \cite{limit1}. (See also \cite{Kardos2017150} for such a representation for limits of matroids.)

%

We shall make use of results of \cite{modeling}, which we state as
Theorem~\ref{thm:reduc}, Lemma~\ref{lem:tree1}, and Lemma~\ref{lem:tree2}, after introducing some preliminary definitions. 

Given a finite signature $\sigma$, let $\sigma^+=\sigma\cup\{R\}$, where $R$ is a unary symbol.
Following \cite{modeling}, 
a {\em rooted $\sigma$-structure} is a $\sigma^+$-structure with exactly one element, called the {\em root},  marked by $R$ (that is: with exactly one element in the relation $R$). A rooted $\sigma$-structure  is thus defined by a $\sigma$-structure $\mathbf A$ and the unique element $\rho\in A$ marked by $R$, justifying the notation $(\mathbf A,\rho)$ for the $\sigma$-structure $\mathbf A$ rooted at $\rho$.

\begin{definition}
A sequence $(\mathbf A_n)_{n\in\bbbn}$ of structures is {\em residual} if
$$\forall r\in\bbbn,\quad \limsup_{n\rightarrow\infty}\sup_{v\in A_n}\nu_{\mathbf A_n}(B_r(\mathbf A_n,v))=0.$$

A sequence $(\mathbf A_n,\rho_n)_{n\in\bbbn}$ of rooted structures is {\em $\rho$-non-dispersive} if
$$\forall\epsilon>0\,\exists d\in\bbbn,\quad \liminf_{n\rightarrow\infty}\nu_{\mathbf A_n}(B_d(\mathbf A_n,\rho_n))>1-\epsilon.$$
\end{definition}

\begin{remark}
	Rooted $\sigma$-structures could alternatively be defined by adding a constant (defining the root) to the signature. The assumption of a root marked by a unary relation is weaker in the sense that a fragment of first-order logic like ${\rm FO}^{\rm local}$ does not allow, in general, to locate the root (it can be far from the free variables). However, in the case of $\rho$-non-dispersive sequences, the root is asymptotically almost surely close to the free variables and the two notions are (almost) equivalent.
\end{remark}

In order to prove Theorem~\ref{thm:folimmap}
we make use of the following reduction theorem.

\begin{theorem}[\cite{modeling}]
\label{thm:reduc}
Let $\mathcal C$ be a hereditary class of relational structures, that is a class of relational structures closed under taking induced substructures.

Assume that for every $\mathbf A_n\in\mathcal C$ and every $\rho_n\in A_n$  ($n\in\bbbn$) the following
properties hold:
\begin{enumerate}
	\item  if $(\mathbf A_n)_{n\in\bbbn}$ is  ${\rm FO}_1^{\rm local}$-convergent and residual, then it has
a modeling ${\rm FO}_1^{\rm local}$-limit;
\item  if $(\mathbf A_n,\rho_n)_{n\in\bbbn}$ is ${\rm FO}^{\rm local}$-convergent and
$\rho$-non-dispersive  then it has
a modeling ${\rm FO}^{\rm local}$-limit.
\end{enumerate}
Then $\mathcal C$ admits ${\rm FO}$-modeling limits.
Moreover, if in cases (1) and (2) the modeling limits satisfy the Strong Finitary Mass Transport Principle\footnote{In the context of (rooted) forest modelings, the notion of Strong Finitary Mass Transport Principle used in \cite{modeling} is the graph analog of the Finitary Mass Transport Principle we introduced for mappings in Definition~\ref{def:FMTP}.}, then $\mathcal C$ admits ${\rm FO}$-modeling limits  that satisfy the Strong Finitary Mass Transport Principle.
\end{theorem}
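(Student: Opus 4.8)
The plan is to prove the reduction in three movements: first a decomposition of an arbitrary ${\rm FO}$-convergent sequence into a residual part and countably many non-dispersive rooted parts; then the application of the two hypotheses to these parts to obtain modeling local limits; and finally a gluing step that recombines them and upgrades ${\rm FO}^{\rm local}$ to full ${\rm FO}$ via Gaifman locality. Throughout I would freely pass to subsequences: since the class of ${\rm FO}$-convergent sequences is stable under taking subsequences and the limiting value $\lim_n\langle\phi,\mathbf A_n\rangle$ of each formula is fixed by the whole sequence, a modeling ${\rm FO}$-limit of any subsequence is automatically a modeling ${\rm FO}$-limit of the original sequence.

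First I would set up the decomposition. For a fixed radius $r$ and threshold $\epsilon>0$, call a point $v$ \emph{$(r,\epsilon)$-heavy} if $\nu_{\mathbf A_n}(B_r(\mathbf A_n,v))\ge\epsilon$. Because the total measure is $1$, any family of heavy points with pairwise disjoint $r$-balls has size at most $1/\epsilon$, so there are boundedly many centres of concentration at each scale. Proceeding greedily — repeatedly selecting a point of maximal ball-measure, designating it a root $\rho_n^{(k)}$, and removing a neighborhood — and then diagonalizing as $r\to\infty$ and $\epsilon\to0$, I would extract (along a subsequence, using the weak compactness provided by Theorem~\ref{thm:cmuc}) countably many rooted pieces together with a remainder. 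Assigning each vertex to the nearest surviving root, restricting to each resulting block (which stays in $\mathcal C$ by hereditariness), and renormalizing the measure, I would arrange that each block $(\mathbf A_n^{(k)},\rho_n^{(k)})$ is ${\rm FO}^{\rm local}$-convergent and $\rho$-non-dispersive, while the remainder sequence $\mathbf A_n^{(0)}$ is ${\rm FO}_1^{\rm local}$-convergent and residual; the total measure of block $k$ converges to a weight $\alpha_k\ge0$ with $\sum_k\alpha_k=1$.

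Next I would apply the hypotheses. Hypothesis (2) gives a modeling ${\rm FO}^{\rm local}$-limit $\mathbf L_k$ for each concentrated block $k\ge1$. For the residual remainder, the crucial observation is that in a residual sequence a uniformly random $p$-tuple has pairwise distance exceeding any fixed $2r$ with probability tending to $1$; hence the $r$-neighborhoods of the coordinates are almost surely disjoint and the statistics of every multivariable local formula factor, asymptotically, as a product of single-variable local statistics. Consequently ${\rm FO}_1^{\rm local}$-convergence already determines all of ${\rm FO}^{\rm local}$ on the remainder, which is exactly why hypothesis (1) need only concern a single free variable; it yields a modeling ${\rm FO}^{\rm local}$-limit $\mathbf L_0$. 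I would then form $\mathbf L$ as the disjoint union of the $\mathbf L_k$ ($k\ge0$) equipped with the measure $\sum_k\alpha_k\,\nu_{\mathbf L_k}$, so that $\mathbf L$ is a modeling and, by the same far-apart factorization, $\langle\phi,\mathbf L\rangle$ matches $\lim_n\langle\phi,\mathbf A_n\rangle$ for every local $\phi$. Finally Gaifman's normal form expresses each first-order formula as a Boolean combination of local formulas and basic local sentences; the local parts define Borel sets with the correct statistics, so once $\mathbf L$ is made to satisfy exactly those basic local sentences whose limit value is $1$, every ${\rm FO}$-definable set of $\mathbf L$ is Borel and $\mathbf L$ is a modeling ${\rm FO}$-limit. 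The ``moreover'' statement then follows because the Strong Finitary Mass Transport Principle is a local mass-conservation identity: holding on each $\mathbf L_k$, it survives integration against the weights $\alpha_k$ on the disjoint union.

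The main obstacle I expect is the decomposition step together with the simultaneous control of countably many scales of concentration: one must verify that after peeling off the concentrated blocks the remainder carries no surviving concentration at any scale, that the greedy extraction converges on a single subsequence despite the double limit in $r$ and $\epsilon$, and that the vertex-to-root assignment loses no asymptotic mass and respects local neighborhoods so that the local statistics of the blocks sum to those of $\mathbf A_n$. A second delicate point, hidden in the Gaifman upgrade, is realizing in $\mathbf L$ \emph{every} local type that occurs cofinally in the sequence, including types carried by vanishing frequency and thus by zero limit measure: those near concentration are captured by the non-dispersive blocks, but the residual ones may have to be adjoined on a null set so that the basic local sentences take their correct values. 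The quantitative form of the far-apart factorization in the residual case is the conceptual heart and must be controlled with explicit error bounds; by comparison, the verification of the Strong Finitary Mass Transport Principle under gluing is routine.
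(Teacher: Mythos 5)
There is no in-paper proof to compare against: Theorem~\ref{thm:reduc} is quoted from \cite{NPOM2arxiv}, and the present paper only applies it (in the proof of Theorem~\ref{thm:folimmap}). Measured against the proof in that reference, your plan has the same architecture: Gaifman's locality theorem to split ${\rm FO}$-convergence into ${\rm FO}^{\rm local}$-convergence plus elementary convergence of sentences; an extraction of countably many $\rho$-non-dispersive rooted pieces (``clusters'') together with a residual remainder; the observation that on residual sequences the statistics of multi-variable local formulas factor asymptotically into single-variable ones (which is precisely why hypothesis (1) may be restricted to one free variable); a weighted disjoint union as the glued limit; and zero-measure components adjoined so that exactly the basic local sentences with limit value $1$ hold in $\mathbf L$. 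So your route is essentially the route of the source.

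That said, what you wrote is a roadmap rather than a proof: the steps you yourself single out as obstacles --- the cluster extraction with its diagonalization over $r$ and $\epsilon$, the quantitative far-apart factorization on residual sequences, and the elementary-convergence repair --- are exactly the hard content of \cite{NPOM2arxiv}, and none of them is carried out here. In particular, for the repair step your sketch leaves unspecified how the null components are obtained and how one avoids accidentally satisfying a sentence whose limit value is $0$; in the reference these components arise by applying hypothesis (2) to suitably rooted sequences (for instance bounded-radius balls around witnesses of persistent local types, which remain in $\mathcal C$ by heredity and are trivially non-dispersive), and one must verify both that every persistent type is realized and that no spurious basic local sentence becomes true. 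Your treatment of the ``moreover'' clause (the Strong Finitary Mass Transport Principle surviving countable weighted gluing and null additions) is correct and is indeed the routine part.
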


Using this theorem we will be able either to use directly or to mimic the proof of the construction of modeling FO-limits for FO-convergent sequences of  forests, and more specifically to the following two lemmas.

\begin{lemma}[{\cite[Lemma 40]{modeling}}]
\label{lem:tree1}
	Every ${\rm FO}_1^{\rm local}$-convergent residual sequence of forests
$(Y_n)_{n\in\bbbn}$ has a modeling ${\rm FO}_1^{\rm local}$-limit (that satisfies the Strong Finitary Mass Transport Principle).
\end{lemma}
\begin{lemma}[{\cite[Lemma 41]{modeling}}]
\label{lem:tree2}
Every  ${\rm FO}$-convergent $\rho$-non-dispersive sequence of rooted trees $(Y_n)_{n\in\bbbn}$ has a modeling ${\rm FO}$-limit (that satisfies the Strong  Finitary Mass Transport Principle).
\end{lemma}

In order to transport constructions from a type of structures (trees, rooted trees) to another one (mapping, rooted mapping), we shall make use of a powerful technique (see for instance \cite{limit1}), which draw strength from the model theoretic notion of interpretation.  We shall make use here of the simplest instance of it.
Let $\kappa,\sigma$ be signatures, where $\sigma$ has 
$q$ relational symbols $R_1,\dots,R_q$ with respective arities $r_1,\dots,r_q$.
A {\em simple interpretation scheme} ${\mathsf I}$ of $\sigma$-structures
in $\kappa$-structures is defined by 
 a first-order formula $\theta_i$ with $r_i$ free variables (in the language of $\kappa$-structures) for each symbol $R_i\in\sigma$.
 Each simple interpretation scheme $\mathsf I$ defines two mappings, which we also denote by $\mathsf I$ by standard abuse of notations:
 \begin{itemize}
 	\item A mapping from $\kappa$-structures to $\sigma$-structures, defined as follows: for a $\kappa$-structure $\mathbf A$, the $\sigma$-structure $\mathsf I(\mathbf A)$ has same domain as $\mathbf A$, and is such that for every $R_i\in\sigma$ with arity $r_i$ and every $(v_1,\dots,v_{r_i}\in A$ it holds
 	\[
 	\mathsf I(\mathbf A)\models R(v_1,\dots,v_{r_i})\quad\iff\quad
 	\mathbf A\models \theta_i(v_1,\dots,v_{r_i}).
 	\]
 	\item A mapping from first-order formulas with $p$ free variables in the language of $\sigma$-structures to first-order formulas with $p$ free variables in the language of $\kappa$-structures, obtained by replacing each occurence of symbol $R_i$ by the corresponding formula $\theta_i$. As a consequence, for every $\kappa$-structure $\mathbf A$, every formula $\varphi$  with $p$ free variables in the language of $\sigma$-structures, and every $v_1,\dots,v_p\in A$ it holds
 	\[
 	\mathsf I(\mathbf A)\models \varphi(v_1,\dots,v_{r_i})\quad\iff\quad
 	\mathbf A\models \mathsf I(\varphi)(v_1,\dots,v_{r_i}).
 	\]
 \end{itemize}
 It is immediate that if a sequence $(\mathbf A_n)_{n\in\mathbb N}$ of $\kappa$-structures is FO-convergent then so is the sequence $(\mathsf I(\mathbf A_n))_{n\in\mathbb N}$. Moreover, if $\mathbf L$ is a modeling FO-limit of the sequence  $(\mathbf A_n)_{n\in\mathbb N}$ then $\mathsf I(\mathbf L)$ (with associated probability measure $\nu_{\mathsf I(\mathbf L)}=\nu_{\mathbf L}$ is a modeling  FO-limit of the sequence $(\mathsf I(\mathbf A_n))_{n\in\mathbb N}$  \cite{limit1}.

\subsection*{Proof of Theorem~\ref{thm:folimmap}}
We encode our mapping as a directed graph and remove directed loops.
This way, mappings are bijectively mapped to loopless directed graphs such that every vertex has outdegree at most one. This class of directed graphs is hereditary, i.e.  closed under taking induced directed subgraphs. 
According to Theorem~\ref{thm:reduc},  we can reduce the problem of existence of modeling limits to two particular cases, namely the case of $\rho$-non-dispersive sequences and the case of residual sequences.

Let $(\mathbf G_n,\rho_n)_{n\in\bbbn}$ be an ${\rm FO}^{\rm local}$-convergent  $\rho$-non-dispersive sequence of directed graphs such that the maximum outdegree is at most $1$. As the sequence is $\rho$-non-dispersive, we can assume that the graphs $G_n$ are connected. If there exists a vertex without an outgoing edge (note that this vertex has to be unique) or if the vertices in the directed cycle are at a distance from $\rho_n$ that grows to infinity then the result follows from the existence of a modeling limit for FO-convergent $\rho$-non-dispersive sequences rooted trees (Lemma~\ref{lem:tree2}). Otherwise, all (but finitely many) $G_n$'s contain a unique directed cycle of length $k$ at bounded distance from $\rho_n$ (this length has to stabilize because of local convergence). For each $n$ we remove an edge in the directed cycle and mark the two vertices to which the removed arc was incident. We extract an {\rm FO}-convergent subsequence and construct an FO-modeling limit of this subsequence (by Lemma~\ref{lem:tree2}). Then we add the edge back using a simple interpretation scheme. This settles the $\rho$-non-dispersive case.

Let $(\mathbf G_n)_{n\in\bbbn}$ be an ${\rm FO}_1^{\rm local}$ residual sequence of directed graphs such that the maximum outdegree is at most $1$.
By Theorem~\ref{thm:cmuc}, the limit is represented by a probability measure $\mu$ on the Stone dual of ${\rm FO}_1^{\rm local}$. Note that each point $\mathbf t$ in the support of $\mu$ is a maximal consistent set of local formulas with a single free variable (see comments after Theorem~\ref{thm:cmuc}). We transform each such point $\mathbf t$ into the complete theory of a rooted connected directed graphs $(\vec{G},\rho)$ with maximum outdegree $1$ (that is into the set of all sentences satisfied by $(\vec G,\rho)$) as follows: a formula $\phi(x_1)\in\mathbf t$ and transform it into the sentence 
$(\exists z)\ (R(z)\wedge\phi(z))$ (where $R$ is the unary relation marking the root). Because of the locality of the formulas in $\mathbf t$, the constructed sentences exactly describe the first-order 
properties of the connected component of the root.
Hence we can reinterpret  $\mu$ as a probability measure on complete theories of rooted connected directed graphs with maximum outdegree $1$.
We partition the support of $\mu$ according to existence (and length) of a directed cycle and existence of a sink. If there is a fixed point, then the result follows from the study of limits of rooted forests (Lemma~\ref{lem:tree1}). For directed cycles of length $k$ this follows also after easy interpretation. It remains the case where every vertex has outdegree $1$ and where the connected component contains no directed cycle, which is an easy extension of Lemma~\ref{lem:tree1}.
	The construction of ${\rm FO}$-limits of rooted forests in the proof of Lemma~\ref{lem:tree1} \cite[Section 8]{modeling} only uses the assumption that the limit is acyclic and has a ``parent'' mapping (that is a function mapping a vertex to its neighbour closer to the root). Thus, it follows that we can apply it directly to residual limits of asymptotically acyclic mappings. For the connected components of the limit with a circuit of length $k$, the limit is obtained by using a simple interpretation scheme.

Note that the probability measure associated to the constructed modeling FO-limit is atomless (because the limit probability of satisfaction of the formula $x_1=x_2$ is $0$), it satisfies the finitary mass transport principle (because the  directed graph modeling we constructed satisfies the analog strong finitary mass transport principle, and its complete theory has the finite model property (as every sentence satisfied by the limit is satisfied by all mappings in the sequence from a sufficient large index, by the definition of FO-convergence).
	\qed 

\begin{remark}
Note that the condition that a father mapping can be defined is essential. The construction in \cite{modeling} does not extend to the general case of convergent sequences of high-girth graphs, although all the connected components of the limit are trees.
\end{remark}

If a sequence $(\mathbf F_n)_{n\in\bbbn}$ is only 
 ${\rm FO}^{\rm local}$-convergent (resp. ${\rm QF}$-convergent), one 
 can consider an ${\rm FO}$-convergent subsequence of  $(\mathbf F_n)_{n\in\bbbn}$, and a modeling ${\rm FO}$-limit $\mathbf L$ of this subsequence (which exists, according to Theorem~\ref{thm:folimmap}). 
 This modeling $\mathbf L$ is then obviously a modeling ${\rm FO}^{\rm local}$-limit (resp. a modeling ${\rm QF}$-limit of $(\mathbf F_n)_{n\in\bbbn}$. Thus the next two corollaries follow.
 
\begin{mcorollary}[Corollary~\ref{cor:local}]
Every  ${\rm FO}^{\rm local}$-convergent sequence $(\mathbf F_n)_{n\in\bbbn}$ of finite mappings  (with $\lim_{n\rightarrow\infty} |F_n|=\infty$) has a modeling mapping limit $\mathbf L$, such that
\begin{enumerate}
	\item the probability measure $\nu_{\mathbf L}$ is atomless;
	\item $\mathbf L$ satisfies the finitary mass transport principle.
\end{enumerate} 	
\end{mcorollary}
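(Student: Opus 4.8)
The plan is to derive Corollary~\ref{cor:local} as a direct specialization of Theorem~\ref{thm:folimmap}. The key observation is that ${\rm FO}^{\rm local}$ is a subfragment of ${\rm FO}$, so every ${\rm FO}$-convergent sequence is in particular ${\rm FO}^{\rm local}$-convergent, but we need the reverse inclusion at the level of limits: we must show that ${\rm FO}^{\rm local}$-convergence alone suffices to produce a modeling limit enjoying properties (1) and (2). First I would invoke the compactness-style argument alluded to in the text. Given an ${\rm FO}^{\rm local}$-convergent sequence $(\mathbf F_n)_{n\in\bbbn}$, I would pass to a subsequence that is ${\rm FO}$-convergent. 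This is possible because, by Theorem~\ref{thm:cmuc}, each $\mathbf F_n$ corresponds to a $\Gamma$-invariant probability measure $\mu_{\mathbf F_n}$ on the (compact, metrizable) Stone dual $S$ of the full ${\rm FO}$ Lindenbaum--Tarski algebra; by weak-$*$ compactness of the space of probability measures on $S$, some subsequence converges weakly, hence is ${\rm FO}$-convergent.

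Having extracted an ${\rm FO}$-convergent subsequence $(\mathbf F_{n_k})_{k\in\bbbn}$, I would apply Theorem~\ref{thm:folimmap} to obtain a modeling mapping limit $\mathbf L$ satisfying conditions (1) atomless measure, (2) FMP, and (3) the finitary mass transport principle. Discarding the FMP conclusion, $\mathbf L$ already satisfies the two properties claimed in the corollary. It then remains only to verify that $\mathbf L$ is an ${\rm FO}^{\rm local}$-limit of the \emph{original} sequence, not merely of the subsequence. This follows because ${\rm FO}^{\rm local}\subseteq{\rm FO}$ means $\mathbf L$ is in particular an ${\rm FO}^{\rm local}$-limit of the subsequence, and for every local formula $\phi$ the limit $\lim_{n\to\infty}\langle\phi,\mathbf F_n\rangle$ exists by hypothesis; since the subsequential limit $\langle\phi,\mathbf L\rangle$ must coincide with this (unique) full limit, we get $\langle\phi,\mathbf L\rangle=\lim_{n\to\infty}\langle\phi,\mathbf F_n\rangle$ for all $\phi\in{\rm FO}^{\rm local}$, as required.

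The subtle point, and the step I expect to be the main obstacle, is the legitimacy of passing to an ${\rm FO}$-convergent subsequence while preserving the conclusion for the original sequence. One must ensure that the modeling limit built from the subsequence is genuinely a limit of the whole sequence in the ${\rm FO}^{\rm local}$ sense; this is exactly the uniqueness-of-local-limit-values argument sketched above, and it goes through cleanly precisely because the corollary only asks for ${\rm FO}^{\rm local}$ quantities, all of which already converge along the full sequence by assumption. A secondary technical care is that the atomlessness and the finitary mass transport principle are intrinsic structural properties of $\mathbf L$ (they do not refer back to the sequence at all), so they transfer automatically from Theorem~\ref{thm:folimmap} without any further argument. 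No additional construction is needed beyond the compactness extraction and this bookkeeping, which is why the corollary is immediate once Theorem~\ref{thm:folimmap} is in hand.
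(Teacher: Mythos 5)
Your proposal is correct and matches the paper's own argument: the paper deduces Corollary~\ref{cor:local} from Theorem~\ref{thm:folimmap} ``by compactness,'' which is exactly your extraction of an ${\rm FO}$-convergent subsequence (via weak-$*$ compactness of probability measures on the Stone space from Theorem~\ref{thm:cmuc}, or equivalently a diagonal argument over the countably many formulas), followed by applying Theorem~\ref{thm:folimmap} to that subsequence and noting that the ${\rm FO}^{\rm local}$ statistics of the original sequence already converge, hence coincide with the subsequential values realized by $\mathbf L$. Your bookkeeping that atomlessness and the finitary mass transport principle are intrinsic properties of $\mathbf L$, while the finite model property conclusion is simply discarded, is precisely the intended reading.
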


If $\mathbf F$ is a Borel mapping then
all the sets $Z_k(\mathbf F)$ (thus also $Z(\mathbf F)$) are Borel subsets of $F$.
	Thus for every subset $X$ of $Z(\mathbf F)$ it holds
	$$f_{\mathbf F}(X)=\bigcup_{k\in\bbbn} f_{\mathbf F}(X\cap Z_k(\mathbf F))=\bigcup_{k\in\bbbn} f_{\mathbf F}^{-(k-1)}(X\cap Z_k(\mathbf F))\cap Z_k(\mathbf F).$$
	Hence if $\mathbf F$ is Borel, the image $f_{\mathbf F}(X)$ of a Borel subset $X$ of $Z(\mathbf F)$ is Borel.
	Moreover, if $\mathbf F$ is a modeling mapping that satisfies the finitary mass transport principle, then it follows immediately  that for every measurable subset $X$ of $Z(\mathbf F)$ it holds $\nu_{\mathbf F}(f_{\mathbf F}(X))=\nu_{\mathbf F}(X)$. Hence the following corollary.

\begin{mcorollary}[Corollary~\ref{cor:QF}]
Every  ${\rm QF}$-convergent sequence $(\mathbf F_n)_{n\in\bbbn}$ of finite mappings  (with $\lim_{n\rightarrow\infty} |F_n|=\infty$) has a Borel mapping limit $\mathbf L$ such that
\begin{enumerate}
	\item the probability measure $\nu_{\mathbf L}$ is atomless;
	\item for every Borel subset $X$ of $Z(\mathbf L)$ it holds $\nu_{\mathbf L}(f_{\mathbf L}(X))=\nu_{\mathbf L}(X)$.
\end{enumerate} 	
\end{mcorollary}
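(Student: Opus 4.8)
The plan is to reduce the quantifier-free case to the local case already settled in Corollary~\ref{cor:local}. The essential subtlety is that a ${\rm QF}$-convergent sequence need not be ${\rm FO}^{\rm local}$-convergent, so one cannot apply Corollary~\ref{cor:local} directly; instead I would first pass to an appropriate subsequence and then argue that the limit it produces is already pinned down on the quantifier-free fragment by the hypothesis.

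First I would extract the subsequence. By Theorem~\ref{thm:cmuc}, each finite mapping $\mathbf F_n$ is encoded by a $\Gamma$-invariant probability measure $\mu_{\mathbf F_n}$ on the Stone dual $S$ of the Lindenbaum--Tarski algebra of ${\rm FO}^{\rm local}$. Since the signature is finite, this Boolean algebra is countable and $S$ is compact and metrizable, so the space of probability measures on $S$ is weak-$*$ sequentially compact. Hence $(\mathbf F_n)_{n\in\bbbn}$ admits an ${\rm FO}^{\rm local}$-convergent subsequence $(\mathbf F_{n_k})_{k\in\bbbn}$. Applying Corollary~\ref{cor:local} to this subsequence produces a modeling mapping limit $\mathbf L$ --- in particular a Borel mapping --- whose measure $\nu_{\mathbf L}$ is atomless and which satisfies the finitary mass transport principle.

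Next I would verify that $\mathbf L$ is in fact a ${\rm QF}$-limit of the whole sequence. Every quantifier-free formula is local (its satisfaction depends only on a bounded neighborhood of its free variables), so ${\rm QF}\subseteq{\rm FO}^{\rm local}$ and therefore $\langle\phi,\mathbf L\rangle=\lim_{k\to\infty}\langle\phi,\mathbf F_{n_k}\rangle$ for every $\phi\in{\rm QF}$. Because the full sequence $(\mathbf F_n)_{n\in\bbbn}$ is ${\rm QF}$-convergent, the limit $\lim_{n\to\infty}\langle\phi,\mathbf F_n\rangle$ exists and coincides with the limit along any subsequence; thus $\langle\phi,\mathbf L\rangle=\lim_{n\to\infty}\langle\phi,\mathbf F_n\rangle$, i.e.\ $\mathbf F_n\xrightarrow{{\rm QF}}\mathbf L$. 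This yields property (1), atomlessness being inherited directly from Corollary~\ref{cor:local}.

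For property (2) I would invoke the discussion preceding the statement. As $\mathbf L$ is Borel, each $Z_k(\mathbf L)$ and hence $Z(\mathbf L)$ is Borel, and for Borel $X\subseteq Z(\mathbf L)$ the image $f_{\mathbf L}(X)=\bigcup_{k\in\bbbn} f_{\mathbf L}^{-(k-1)}(X\cap Z_k(\mathbf L))\cap Z_k(\mathbf L)$ is Borel as well. Since $f_{\mathbf L}$ restricts to a bijection of $Z(\mathbf L)$ onto itself, each $y\in f_{\mathbf L}(X)$ has exactly one preimage lying in $X$, so applying the finitary mass transport principle with $A=X$ and $B=f_{\mathbf L}(X)$ gives $\nu_{\mathbf L}(X)=\nu_{\mathbf L}(X\cap f_{\mathbf L}^{-1}(f_{\mathbf L}(X)))=\int_{f_{\mathbf L}(X)}|f_{\mathbf L}^{-1}(y)\cap X|\,{\rm d}\nu_{\mathbf L}(y)=\nu_{\mathbf L}(f_{\mathbf L}(X))$, up to the null set $F_\infty$. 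I expect the only genuine obstacle to be the coherence step of the first two paragraphs: one must guarantee that the object extracted along a subsequence represents the prescribed ${\rm QF}$-limit of the original sequence, which is exactly where ${\rm QF}$-convergence of the whole sequence (rather than mere existence of a subsequential limit) is used; the remaining computations are immediate given the properties supplied by Corollary~\ref{cor:local}.
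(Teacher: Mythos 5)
Your proof is correct and takes essentially the same route as the paper: the paper likewise deduces Corollary~\ref{cor:QF} from Theorem~\ref{thm:folimmap} ``by compactness'' (extracting a convergent subsequence and using ${\rm QF}$-convergence of the whole sequence to pin down the limit values on the quantifier-free fragment), which is exactly your argument phrased through Corollary~\ref{cor:local}. Your derivation of property (2) from the finitary mass transport principle, via the bijectivity of $f_{\mathbf L}$ on $Z(\mathbf L)$ and the Borel decomposition $f_{\mathbf L}(X)=\bigcup_{k} f_{\mathbf L}^{-(k-1)}(X\cap Z_k(\mathbf L))\cap Z_k(\mathbf L)$, is precisely the argument the paper gives in the paragraph preceding the corollary.
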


\begin{figure}
	\begin{center}
		\includegraphics[width=\textwidth]{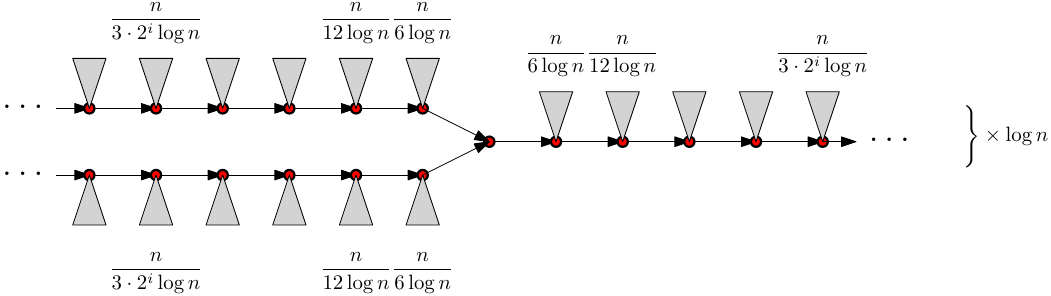}
	\end{center}
	\caption{Example of a sequence of mappings such that in the limit a random vertex belongs with high probability to a connected components with $3$ ends. (Gray triangles represent many elements mapping to a single element.)}
\end{figure}

\begin{figure}
	\begin{center}
		\includegraphics[width=\textwidth]{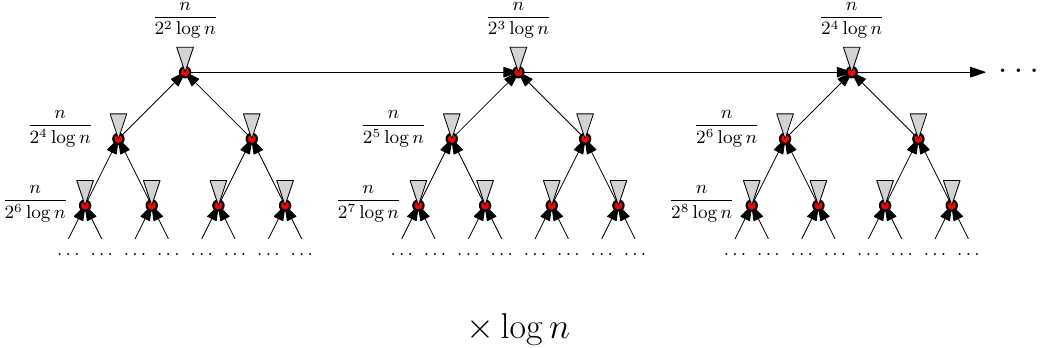}
	\end{center}
	\caption{Example of a sequence of mappings such that in the limit a random vertex belongs with high probability to a connected components with $\omega$ ends. (Gray triangles represent many elements mapping to a single element.)}
\end{figure}

\section{Quantifier-free Approximation}
\label{sec:approx}
The aim of this section is to prove Theorem~\ref{thm:invqfmap}, which states that every atomless Borel mapping $\mathbf L$ such that for every measurable subset $X$ of $Z(\mathbf L)$ it holds $\nu_{\mathbf L}(f_{\mathbf L}(X))=\nu_{\mathbf L}(X)$ is the 
${\rm QF}$-limit of a ${\rm QF}$-convergent sequence of finite mappings.
(Note that in our proofs we only need that $\nu_{\mathbf L}(f_{\mathbf L}(X))=\nu_{\mathbf L}(X)$ holds for Borel subsets $X$ of the form $\phi(\mathbf L)$, where $\phi$ is quantifier-free.)

The proof follows three steps:
\begin{enumerate}
	\item Using Lemma~\ref{lem:samp} below (which was derived in \cite{QFTSL-arxiv} from a concentration result of McDiarmid \cite{mcdiarmid1989method}) we construct an approximation that is a finite mapping on a weighted set.
	\item Then we prove that the weighting can be made constant on cycles, by using a new uniformization technique; this technique relies on the notion of QF-definable group.
	\item Then we show how to blow the elements of the finite weighted model to obtain an unweighted finite approximation.
\end{enumerate}

\begin{lemma}[Corollary 1 of \cite{QFTSL-arxiv}]
\label{lem:samp}
For every signature $\sigma$,  
 every Borel $\sigma$-structure $\mathbf{A}$, every $\epsilon>0$ and every $p,q\in\bbbn$ there exists a finite weighted $\sigma$-structure  $\mathbf B$ such that for every 
quantifier-free formula $\phi$ with at most $p$ free variables and $q$ functional symbols it holds
$$
|\langle\phi,\mathbf{A}\rangle-\langle\phi,\mathbf{B}\rangle|<\epsilon.
$$
\end{lemma}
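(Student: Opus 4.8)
The plan is to obtain $\mathbf B$ by \emph{random sampling} from $\mathbf A$ and to control the resulting quantifier-free statistics by a bounded-differences (McDiarmid) concentration argument. First I would reduce to finitely many test formulas. The value $\langle\phi,\mathbf A\rangle$ of any quantifier-free $\phi$ with at most $p$ free variables and at most $q$ function symbols depends only on the distribution, under $\nu_{\mathbf A}^{\otimes p}$, of the \emph{quantifier-free type} of a $p$-tuple, where such a type records which terms of depth at most $q$ are equal and which relations hold among them. Since only finitely many symbols of $\sigma$ can occur, there are finitely many complete such types $\tau_1,\dots,\tau_M$, and every admissible $\phi$ is equivalent to a disjunction of some of them. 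It therefore suffices to arrange $|\langle\tau_j,\mathbf A\rangle-\langle\tau_j,\mathbf B\rangle|<\epsilon/M$ for each $j$, since the error for any $\phi$ is then bounded by the sum over its constituent types.

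Next I would build $\mathbf B$ as a finite weighted substructure of $\mathbf A$. Draw $N$ independent points $X_1,\dots,X_N$ according to $\nu_{\mathbf A}$, set $S=\{X_1,\dots,X_N\}$, and let $\bar S\subseteq A$ be the closure of $S$ under all function symbols iterated $q$ times (a finite set, as each iteration multiplies the cardinality by a bounded factor). Take $B=\bar S$, inherit all relations from $\mathbf A$, and let each function symbol agree with its interpretation in $\mathbf A$ whenever the value lands in $\bar S$ and be a self-loop otherwise (this only affects arguments at depth exactly $q$). Equip $\bar S$ with the push-forward of the empirical measure $\frac1N\sum_{i=1}^N\delta_{X_i}$, so that $S$ carries all the weight and $\bar S\setminus S$ carries none. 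The point of the construction is that, for any admissible $\phi$ and any indices $i_1,\dots,i_p$, every term in $\phi(X_{i_1},\dots,X_{i_p})$ has depth at most $q$ and is thus evaluated inside $\bar S$ exactly as in $\mathbf A$; since equalities and relations are inherited, $\mathbf B\models\phi(X_{i_1},\dots,X_{i_p})$ iff $\mathbf A\models\phi(X_{i_1},\dots,X_{i_p})$. Consequently
\[
\langle\phi,\mathbf B\rangle=\frac1{N^p}\sum_{i_1,\dots,i_p=1}^N \mathbf 1[\mathbf A\models\phi(X_{i_1},\dots,X_{i_p})],
\]
a $V$-statistic estimating $\langle\phi,\mathbf A\rangle$.

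It remains to show this estimator concentrates. The tuples with a repeated index form a fraction $O(p^2/N)$ of all $N^p$ tuples, and on the remaining tuples the entries are i.i.d.\ $\nu_{\mathbf A}$, so $\mathbb E\,\langle\phi,\mathbf B\rangle=\langle\phi,\mathbf A\rangle+O(p^2/N)$. Viewing $\langle\phi,\mathbf B\rangle$ as a function of $(X_1,\dots,X_N)$, altering a single coordinate changes at most $pN^{p-1}$ of the summands, each by at most $N^{-p}$, so the bounded-differences constant is $p/N$ in each coordinate. McDiarmid's inequality \cite{mcdiarmid1989method} then yields $\Pr[\,|\langle\phi,\mathbf B\rangle-\mathbb E\langle\phi,\mathbf B\rangle|\ge t\,]\le 2\exp(-2t^2N/p^2)$. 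Taking $t=\epsilon/3$, a union bound over $\tau_1,\dots,\tau_M$, and $N$ large enough that both the $O(p^2/N)$ bias falls below $\epsilon/3$ and $2M\exp(-2t^2N/p^2)<1$, gives a positive-probability, hence realizable, sample for which every type statistic, and therefore every admissible $\phi$, satisfies $|\langle\phi,\mathbf A\rangle-\langle\phi,\mathbf B\rangle|<\epsilon$.

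The main obstacle I anticipate is the correct treatment of the function symbols: unlike the purely relational case, a sampled tuple must be evaluated through composed terms, and one must guarantee that these evaluations agree with those in $\mathbf A$. The term-closure construction resolves this, the key observation being that a formula with at most $q$ function symbols never probes the (arbitrarily chosen) boundary values of the functions on $\bar S$. A secondary point requiring care is the finiteness of the type set $\{\tau_1,\dots,\tau_M\}$ used in the union bound, which holds because each admissible formula involves only finitely many symbols of $\sigma$; the concentration step itself is then routine.
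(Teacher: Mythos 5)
Your proposal follows what is essentially the paper's own route: the paper does not prove Lemma~\ref{lem:samp} itself, but imports it as Corollary~2 of \cite{QFTSL-arxiv}, where it is derived from McDiarmid's bounded-differences inequality \cite{mcdiarmid1989method} applied to random sampling --- precisely your strategy. Your term-closure device (ensuring that every term of depth at most $q$ evaluates in $\mathbf B$ exactly as in $\mathbf A$), the identification of $\langle\phi,\mathbf B\rangle$ with an empirical $V$-statistic, the $O(p^2/N)$ bias estimate, and the bounded-difference constant $p/N$ are all sound.

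Two points need attention. The minor one: your final calibration is inconsistent with your own reduction. Having reduced to the complete types $\tau_1,\dots,\tau_M$ with target accuracy $\epsilon/M$ each, you must run the concentration and bias estimates at level $\epsilon/(3M)$, not $\epsilon/3$; as written, a formula that is a disjunction of several types is only approximated within $2M\epsilon/3$. Since $M$ depends only on $\sigma$, $p$ and $q$, this is a one-line fix. The more substantive one: your justification for the finiteness of the type set --- ``each admissible formula involves only finitely many symbols of $\sigma$'' --- does not give what you need. Each formula indeed uses finitely many symbols, but the family of admissible formulas ranges over all of $\sigma$, so if $\sigma$ is infinite the set of complete types is infinite and the union bound collapses. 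This step is not repairable in that generality, because the lemma as literally stated (``for every signature $\sigma$'') is false for infinite signatures: take $\sigma=\{M_1,M_2,\dots\}$ with all $M_i$ unary, let $\mathbf A$ be $[0,1]$ with Lebesgue measure and $M_i^{\mathbf A}$ the set of reals whose $i$-th binary digit is $1$; any finite structure $\mathbf B$ must interpret two predicates identically, say $M_i^{\mathbf B}=M_j^{\mathbf B}$ with $i\neq j$, and then $\langle M_i(x)\wedge\neg M_j(x),\mathbf B\rangle=0$ while $\langle M_i(x)\wedge\neg M_j(x),\mathbf A\rangle=1/4$, so no finite $\mathbf B$ works for $\epsilon<1/4$, $p=1$, $q=0$. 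Thus finiteness of $\sigma$ is a necessary hypothesis --- implicit in the paper, whose structures carry one function symbol and finitely many color predicates --- and your type-counting step should invoke it explicitly rather than the per-formula finiteness.
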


We define the ternary operator `$\switch{}{}{}$', which is usual in computer science, as follows: for every formula $\phi$ and every terms $t_1,t_2$

$$
\switch{\phi}{t_1}{t_2}=\begin{cases}
	t_1&\text{if }\phi\text{ holds,}\\
	t_2&\text{otherwise.}
\end{cases}
$$

\begin{definition}
	The notion of {\em QF-definable} function is defined inductively:
	\begin{itemize}
		\item every function in the signature is QF-definable;
		\item the identity is QF-definable;
		\item every composition of QF-definable functions is QF-definable;
		\item for every QF-formula $\phi$ and every QF-definable functions $f,g$, the function 
		 $h:=\switch{\phi}{f}{g}$
 is QF-definable.
	\end{itemize}
\end{definition}

For instance, if we consider the signature formed by a single unary function symbol $f$, an example of QF-definable function is
$$
g(x)
=\begin{cases}
	f(x)&\text{if }f^2(x)=x\\
	x&\text{otherwise}
\end{cases}
$$
(as $g(x)=\switch{(f(f(x))=x)}{f(x)}{x}$).

Note that (by easy induction) for every quantifier-free formula $\phi$ (with $p$ free variables) and every $p$-tuple
$\mathbf g=(g_1,\dots,g_p)$ of QF-definable functions  there exists a quantifier-free formula
$\phi_{\mathbf g}$ such that 
$\phi(g_1(x_1),\dots,g_p(x_p))$ is logically equivalent to $\phi_{\mathbf g}(x_1,\dots,x_p)$.

\begin{definition}
	A {\em QF-definable group} is a set $\Gamma$ of QF-definable unary functions, which contains the identity, is closed under compositions, and such that every function in $\Gamma$ has an inverse in $\Gamma$.
\end{definition}

Note that a QF-definable group naturally acts on every $\sigma$-structure.

\begin{lemma}
\label{lem:QFreg}
	Let $\mathbf L$ be a Borel $\sigma$-structure, and let $\Gamma$ be a finite QF-definable group acting on $\sigma$-structures.
	
	If $\nu_{\mathbf L}$ is $\Gamma$-invariant, then for every $\epsilon>0$, and every $p,q\in\bbbn$ there exists a finite weighted $\sigma$-structure $\mathbf F$ with $\Gamma$-invariant $\nu_{\mathbf F}$ such that
	$$
	|\langle\phi,\mathbf F\rangle-\langle\phi,\mathbf L\rangle|<\epsilon
	$$
	holds for every QF-formula with at most $p$ free variables and $q$ functional symbols.
\end{lemma}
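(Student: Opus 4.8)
The plan is to obtain $\mathbf F$ by first sampling $\mathbf L$ with Lemma~\ref{lem:samp} and then \emph{symmetrizing} the resulting measure over the action of $\Gamma$. The crucial observation, which I would establish first, is that the hypothesis that $\nu_{\mathbf L}$ is $\Gamma$-invariant forces, for every QF-formula $\phi(x_1,\dots,x_p)$ and every tuple $\vec\gamma=(\gamma_1,\dots,\gamma_p)\in\Gamma^p$, the equality
\[
\langle\phi_{\vec\gamma},\mathbf L\rangle=\langle\phi,\mathbf L\rangle ,
\]
where $\phi_{\vec\gamma}$ is the QF-formula equivalent to $\phi(\gamma_1(x_1),\dots,\gamma_p(x_p))$ (it exists by the note preceding the definition of a QF-definable group). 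Indeed, writing also $\vec\gamma$ for the product map $\gamma_1\times\cdots\times\gamma_p$, one has $\phi_{\vec\gamma}(\mathbf L)=\vec\gamma^{-1}(\phi(\mathbf L))$, and since each $\gamma_i$ preserves $\nu_{\mathbf L}$ the map $\vec\gamma$ preserves $\nu_{\mathbf L}^{\otimes p}$, so these two definable sets have the same measure.

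Next I would fix the parameters for the sampling step. Because $\Gamma$ is finite, there is a single bound $Q=Q(p,q,\Gamma)$ on the number of functional symbols occurring in $\phi_{\vec\gamma}$, valid for all $\phi$ with at most $p$ free variables and $q$ functional symbols and all $\vec\gamma\in\Gamma^p$; the number of free variables remains at most $p$, since substituting $\gamma_i(x_i)$ for $x_i$ introduces no new free variables. Applying Lemma~\ref{lem:samp} to $\mathbf L$ with parameters $p$, $Q$, and $\epsilon$ yields a finite weighted $\sigma$-structure $\mathbf B$ with $|\langle\psi,\mathbf B\rangle-\langle\psi,\mathbf L\rangle|<\epsilon$ for every QF-formula $\psi$ having at most $p$ free variables and $Q$ functional symbols; in particular this holds for each $\psi=\phi_{\vec\gamma}$.

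I would then let $\mathbf F$ have the same domain and the same interpretations of all symbols as $\mathbf B$, but equipped with the symmetrized measure
\[
\nu_{\mathbf F}=\frac{1}{|\Gamma|}\sum_{\gamma\in\Gamma}\gamma_*\nu_{\mathbf B}.
\]
A one-line reindexing $\gamma\mapsto\delta\gamma$ shows $\delta_*\nu_{\mathbf F}=\nu_{\mathbf F}$ for every $\delta\in\Gamma$, so $\nu_{\mathbf F}$ is $\Gamma$-invariant; observe that no bijectivity of the $\gamma$'s on the finite domain is needed, as only pushforwards and the group law of $\Gamma$ enter. Expanding the product measure as $\nu_{\mathbf F}^{\otimes p}=|\Gamma|^{-p}\sum_{\vec\gamma}\vec\gamma_*(\nu_{\mathbf B}^{\otimes p})$ and using $\vec\gamma^{-1}(\phi(\mathbf B))=\phi_{\vec\gamma}(\mathbf B)$ gives
\[
\langle\phi,\mathbf F\rangle=\frac{1}{|\Gamma|^p}\sum_{\vec\gamma\in\Gamma^p}\langle\phi_{\vec\gamma},\mathbf B\rangle .
\]

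Finally I would combine the two ingredients: each summand satisfies $|\langle\phi_{\vec\gamma},\mathbf B\rangle-\langle\phi,\mathbf L\rangle|=|\langle\phi_{\vec\gamma},\mathbf B\rangle-\langle\phi_{\vec\gamma},\mathbf L\rangle|<\epsilon$, so the average is within $\epsilon$ of $\langle\phi,\mathbf L\rangle$, which is the desired conclusion. The point where the hypothesis is genuinely used—and the main obstacle to a naive argument—is the first step: symmetrizing an arbitrary sampled measure would in general distort QF-densities, and it is precisely the $\Gamma$-invariance of $\nu_{\mathbf L}$, yielding $\langle\phi_{\vec\gamma},\mathbf L\rangle=\langle\phi,\mathbf L\rangle$ uniformly in $\vec\gamma$, that makes the averaged error unbiased. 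The remaining technical point, the uniform symbol-count bound $Q$, is routine once one uses that $\Gamma$ is finite.
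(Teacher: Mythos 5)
Your proposal is correct and follows essentially the same route as the paper's proof: sample $\mathbf L$ via Lemma~\ref{lem:samp} with a uniform bound on functional symbols for the translated formulas $\phi_{\vec\gamma}$, symmetrize the sampled measure over the finite group $\Gamma$, and use $\Gamma$-invariance of $\nu_{\mathbf L}$ to identify the averaged densities with $\langle\phi,\mathbf L\rangle$. The only differences are cosmetic streamlining: you expand $\nu_{\mathbf F}^{\otimes p}$ directly to get $\langle\phi,\mathbf F\rangle=|\Gamma|^{-p}\sum_{\vec\gamma}\langle\phi_{\vec\gamma},\mathbf B\rangle$ (the paper splits this into a $\Gamma$-invariance step plus a ``by construction'' sum identity), and you note that averaging makes the paper's extra $\epsilon/|\Gamma|^p$ precision and union bound unnecessary.
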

\begin{proof}
	
	Let $\phi$ be a quantifier free formula with $k\leq p$ free variables and at most $\ell\leq q$ functional symbols.
	As $\Gamma$ is QF-definable, for every $\mathbf g\in\Gamma^k$ there exists a QF formula $\phi_{\mathbf g}$ such that
	$\phi_{\mathbf g}(x_1,\dots,x_k)=\phi(g_1(x_1),\dots,g_k(x_k))$.
	Moreover, there is a constant $C$ (independent of $\phi$, but dependent of $\Gamma$) such that
	$\phi_{\mathbf g}$ can be required to use at most $Ck\ell$ functional symbols.
	
	According to Lemma~\ref{lem:samp} there exists a finite weighted structure $\mathbf F_0$ such that for every QF formula $\psi$ with at most $p$ free variables and $Cpq$ functional symbols it holds
	$$|\langle\psi,\mathbf F_0\rangle-\langle\psi,\mathbf L\rangle|<\frac{\epsilon}{|\Gamma|^p}.$$

	Let $\mathbf F$ be the structure $\mathbf F_0$ with new probability measure
	$$\nu_{\mathbf F}(\{v\})=\frac{1}{|\Gamma|}\sum_{g\in\Gamma}\nu_{\mathbf F_0}(g(v)).$$

	As $\nu_{\mathbf L}$ is $\Gamma$-invariant, it follows that
$$\langle\phi,\mathbf L\rangle=\frac{1}{|\Gamma|^k}\sum_{\mathbf g\in\Gamma^k}\langle\phi_{\mathbf g},\mathbf L\rangle.$$

As each $\phi_{\mathbf g}$ has at most $p$ free variables and 
$Cpq$  functional symbols, by union bound it holds
	$$
\Bigl|\langle\phi,\mathbf L\rangle-\frac{1}{|\Gamma|^k}\sum_{\mathbf g\in\Gamma^k}\langle\phi_{\mathbf g},\mathbf F_0\rangle\Bigr|<\epsilon.$$
By construction, it holds
$$\sum_{\mathbf g\in\Gamma^k}\langle\phi_{\mathbf g},\mathbf F_0\rangle=\sum_{\mathbf g\in\Gamma^k}\langle\phi_{\mathbf g},\mathbf F\rangle.$$
	As $\nu_{\mathbf F}$ is $\Gamma$-invariant, it follows that
$$\langle\phi,\mathbf F\rangle=\frac{1}{|\Gamma|^k}\sum_{\mathbf g\in\Gamma^k}\langle\phi_{\mathbf g},\mathbf F\rangle.$$
Altogether we get
$$
	|\langle\phi,\mathbf F\rangle-\langle\phi,\mathbf L\rangle|<\epsilon
$$
\end{proof}

We obtain Theorem~\ref{thm:invqfmap} if we apply this lemma to Borel mappings.
\subsection*{Proof of Theorem \ref{thm:invqfmap}}
We shall prove here that for every $\epsilon>0$ and every positive integers $p,q$ there exists a finite mapping $\widehat{\mathbf F}$ (with uniform probability measure $\nu_{\widehat{\mathbf F}}$) such that for every QF-formula $\phi$  with at most $p$ free variables and $q$ function symbols it holds
\[
|\langle\phi,\widehat{\mathbf F}\rangle-\langle\phi,{\mathbf L}\rangle|<\epsilon.
\]

	For $1\leq k\leq q$, let $\zeta_k$ be the QF-definable function
	$$
	\zeta_k(v)=\switch{\Bigl((f^k(v)=v)\wedge\bigwedge_{1\leq i<k}(f^i(v)\neq v)\Bigr)}{f(v)}{v}.
	$$
Note that $\zeta_k^{-1}=\zeta^{k-1}$ hence the group $\Gamma$ generated by $\zeta_1,\dots,\zeta_q$ is a (finite Abelian) QF-definable group. By 
Lemma~\ref{lem:QFreg} there exists a finite weighted mapping $\mathbf F$ such that $\nu_{\mathbf F}$ is $\Gamma$-invariant, and for every
QF-formula $\phi$ with at most $p$ free variables and $q$ functional symbols it holds $|\langle\phi,\mathbf F\rangle-\langle\phi,\mathbf L\rangle|<\epsilon/3$.

Let $N$ be an integer such that
$N>\frac{3}{\epsilon}\Bigl(\frac{p^2(q+1)}{|F|}+2p\Bigr)$.

 Let $v_1,\dots,v_n$ be the elements of $F$, and let $\nu(v_1),\dots,\nu(v_n)$ be their weights.
 We define $\widehat F\subseteq F\times \{0,1\dots,N|F|\}$ by
 $$\widehat F=\{(v,j):\ v\in F, j\in \{0,1,\dots,\lfloor N\nu_{\mathbf F}(\{v\})|F|\rfloor\}\}$$
and the mapping $\widehat{\mathbf F}$ with domain $\widehat F$
and uniform probability measure $\nu_{\widehat{\mathbf F}}$ 
 by
$$f_{\widehat{\mathbf F}}(v,j)=
\begin{cases}
(f_{\mathbf F}(v),j)&\text{if }v\in \bigcup_{i=0}^qZ_i(\mathbf F),\\
(f_{\mathbf F}(v),0)&\text{otherwise}. 
\end{cases}$$
Note that $N |F|\leq |\widehat{F}|<(N+1)|F|$.

Let $\phi$ be a quantifier-free formula with $p$ free variables and $q$ function symbols, and let $X_1,\dots,X_p$ be random elements of $\widehat F$ drawn uniformly and independently at random (that is with respect to the uniform probability measure $\nu_{\widehat{\mathbf F}}$).
Let $\pi:\widehat F\rightarrow F$ be the projection $(v,j)\mapsto v$, and let  $\tilde{\nu}$ be the push-forward of the measure $\nu_{\widehat{\mathbf F}}$ by $\pi$.
Then $\pi(X_1),\dots, \pi(X_p)$ are i.i.d. elements of $F$ distributed with respect to the probability measure $\tilde{\nu}$. 
So let $\widetilde{\mathbf F}$ be the mapping $\mathbf F$ with probability measure $\tilde{\nu}$ instead of $\nu_{\mathbf F}$
(that is: $\nu_{\widetilde{\mathbf F}}=\tilde{\nu}$).

Note that for every $v\in F$ it holds
$$\nu_{\mathbf F}(\{v\})-\frac{1}{|\widehat{F}|}<\tilde{\nu}(\{v\})\leq \nu_{\mathbf F}(\{v\}).$$
Thus 
\begin{align*}
	|\langle\phi,\widetilde{\mathbf F}\rangle-\langle\phi,\mathbf F\rangle|&\leq \nu_{\mathbf F}^{\otimes p}(\phi(\mathbf F))-\tilde{\nu}^{\otimes p}(\phi(\mathbf F))\\
	&\leq p\sup_{X\subseteq F} |\nu_{\mathbf F}(X)-\tilde{\nu}(X)|\\
	&\leq p |F|/|\widehat F|\leq p/N.
\end{align*}

Let 
$\xi$ be the formula
$$
\xi(x_1,\dots,x_p):=\bigwedge_{\substack{1\leq i\neq j\leq p\\0\leq a\leq q}} f^a(x_i)\neq x_j,
$$
where $f^0$ is the identity function.

As $\mathbf L$ is atomless, it holds $\langle\xi,\mathbf L\rangle=0$.
Also, it is easily checked that $\langle\xi,\widehat{\mathbf  F}\rangle\geq 1-\binom{p}{2}(q+1)/|\widehat F|$.

Let $\psi=\phi\wedge\xi$.
Then $\psi$ can be written as 
\[\psi=\xi\wedge\bigvee\!\!\!\!\!\!\bigvee_{\alpha}\bigwedge_\beta \phi_{\alpha,\beta},\] 
where  $\vee\!\!\!\vee$ denotes an exclusive disjunction, and where  the literals $\phi_{\alpha,\beta}$'s are   of the form
$f^a(x_i)=f^b(x_j)$ or $f^a(x_i)\neq f^b(x_j)$ (with no $(i,j,a,b)$ of the form $(i,j,a,0)$ for some  $i\neq j$). 

There are two cases:
\begin{itemize}
\item  $(i,j,a,b)$ is such that $i\neq j$ and $a,b>0$.
 
 For every $(u,k),(v,\ell)\in \widehat{F}$ and $u\neq v$  it holds \[f_{\widehat{\mathbf F}}^a(u,k)= f_{\widehat{\mathbf F}}^b(v,\ell)\iff f_{\mathbf F}^a(u)= f_{\mathbf F}^b(v).\] 
Thus 
$\langle f^a(x_i)=f^{b}(x_j),\widehat{\mathbf F}\rangle=\langle f^a(x_i)=f^{b}(x_j),\widetilde{\mathbf F}\rangle$.	
\item $(i,j,a,b)$ has form  $(i,i,a,a+b)$ with $b>0$.
 
For every $(u,k)\in \widehat{F}$ it holds
\[f_{\widehat{\mathbf F}}^{a}(u,k)=f_{\widehat{\mathbf F}}^{a+b}(u,k)\iff f_{\mathbf F}^{a}(u)=f_{\mathbf F}^{a+b}(u).\]
 Thus
 $\langle f^a(x_i)=f^{a+b}(x_i),\widehat{\mathbf F}\rangle=
\langle f^a(x_i)=f^{a+b}(x_i),\widetilde{\mathbf F}\rangle$.
\end{itemize}

Altogether, we get that
\begin{align*}
	|\langle\phi, \widehat{\mathbf F}\rangle-\langle\phi, \widetilde{\mathbf F}\rangle|&\leq 
	\langle\neg\xi,\widehat{\mathbf F}\rangle+
	\langle\neg\xi,\widetilde{\mathbf F}\rangle\\
	&\leq \binom{p}{2}(q+1)/|\widehat F|+p/N+\langle\neg\xi,{\mathbf F}\rangle\\
	&\leq \frac{1}{N}\biggl(\frac{p^2(q+1)}{|F|}+p\biggr)+\epsilon/3.
	\intertext{Thus}
	|\langle\phi, \widehat{\mathbf F}\rangle-\langle\phi, \mathbf L\rangle|&\leq |\langle\phi, \widehat{\mathbf F}\rangle-\langle\phi, \widetilde{\mathbf F}\rangle|+|\langle\phi, \widetilde{\mathbf F}\rangle-\langle\phi, {\mathbf F}\rangle|+|\langle\phi, \mathbf F\rangle-\langle\phi, \mathbf L\rangle|\\
	&\leq \frac{1}{N}\biggl(\frac{p^2(q+1)}{|F|}+2p\biggr)+2\epsilon/3.
	\end{align*}
Hence, according to the definition of $N$ it holds
	\[|\langle\phi, \widehat{\mathbf F}\rangle-\langle\phi,\mathbf L\rangle|<\epsilon.\]

\qed 

\begin{remark}
Note that in the above construction, we construct approximations of order $Cn(1+o(1))$ for some constant $C$ and sufficiently large (but arbitrary) $n$.	
\end{remark}

\section{Concluding Remarks}
\label{sec:conc}

Theorem~\ref{thm:invqfmap} can be generalized to other fragments of first-order formulas. This is reflected for example by the following two results.

\begin{theorem}
\label{thm:invlocmap}
Every atomless modeling mapping $\mathbf L$  that satisfies the finitary mass transport principle is the 
${\rm FO}^{\rm local}$-limit of an ${\rm FO}^{\rm local}$-convergent sequence of finite mappings.
\end{theorem}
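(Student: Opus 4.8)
The plan is to follow the same three-step scheme --- sampling, uniformization, and blowing --- used in the proof of Theorem~\ref{thm:invqfmap}, but to upgrade each step so that it preserves not merely the quantifier-free statistics but the full distribution of rooted neighborhood types. The point is that a local formula with a single free variable and radius $r$ is determined by the isomorphism type of the ball $B_r(\mathbf L,v)$, so it suffices to produce finite mappings $\mathbf F_n$ whose empirical distribution of rooted $r$-balls converges, for every $r$, to that of a $\nu_{\mathbf L}$-random point (and, for formulas in several variables, the joint distribution of the balls around finitely many independent roots). Since $\mathbf L$ is atomless and satisfies the FMTP, we have $\nu_{\mathbf L}(F_\infty)=0$, so almost every ball $B_r(\mathbf L,v)$ is finite; thus the relevant data is, for each $r$, a probability distribution over \emph{finite} rooted balls, and this is exactly the kind of object the FMTP forces to be realizable.

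First I would reduce to the acyclic case by the component decomposition already exploited in the proof of Theorem~\ref{thm:folimmap}. Each connected component of a mapping carries at most one cycle, so I partition $L$ according to the length $k$ of the cycle in the component of the free variable (with separate parts for the acyclic components and for fixed points). On each cyclic part I cut a single edge of the length-$k$ cycle and mark the two incident vertices, turning the component into a rooted and marked tree; the resulting restriction is again an atomless modeling structure to which the FMTP for $\mathbf L$ descends. This isolates the genuinely tree-like content.

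Next I would invoke the local inverse theorem for rooted forests developed in \cite{NPOM2arxiv}: a modeling forest equipped with a father mapping and satisfying the FMTP is the ${\rm FO}^{\rm local}$-limit of a sequence of finite forests. Applying this to each marked-tree part yields finite forests with the correct distribution of $r$-ball types; I then re-attach the removed cycle edges by a basic interpretation (identifying the marked endpoints and reinserting a directed cycle of length $k$), exactly as in the forward construction, which perturbs only a vanishing fraction of the neighborhood statistics. The uniformization step --- regularizing the weights along cycles by averaging over the finite QF-definable group generated by the cyclic shifts $\zeta_k$, as in Lemma~\ref{lem:QFreg} --- carries over in essentially the same way and guarantees that the gluing is consistent.

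The main obstacle is the blowing step. In the quantifier-free proof, every copy of an element was allowed to map to a single copy of its image: this is harmless for ${\rm QF}$-statistics, since a generic tuple avoids all coincidences, but it is fatal for local statistics because it inflates in-degrees and hence destroys the neighborhood types. To preserve $r$-ball types, the blow-up must reproduce the entire in-degree distribution: when replacing a point $v$ by finitely many copies, the preimage copies of $v$ must be distributed among the image copies so that each copy receives the number of preimages prescribed by the target local type. This is precisely where the FMTP is indispensable: the identity $\nu_{\mathbf L}(A\cap f_{\mathbf L}^{-1}(B))=\int_B |f_{\mathbf L}^{-1}(y)\cap A|\,{\rm d}\nu_{\mathbf L}(y)$ guarantees that the total preimage mass matches the total mass along the degree partition $(F_i)_{i\ge 0}$, so that the prescribed integer in-degrees can be realized globally with neither deficit nor surplus and the assignment closes up into a genuine finite mapping. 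Establishing this balancing --- and verifying that, after blowing, a $\nu$-random vertex of the finite mapping has with high probability an $r$-ball isomorphic to that of the corresponding point of $\mathbf L$ --- is the technical heart of the argument and the place where the proof genuinely departs from the ${\rm QF}$ case.
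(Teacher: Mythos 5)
Your proposal has a genuine gap at its central step: you invoke ``the local inverse theorem for rooted forests developed in \cite{NPOM2arxiv}'', i.e.\ the statement that a modeling forest with a father mapping satisfying the FMTP is the ${\rm FO}^{\rm local}$-limit of finite forests. No such theorem is available there: \cite{NPOM2arxiv}, as used in this paper (in the proof of Theorem~\ref{thm:folimmap}), solves the \emph{direct} problem --- every ${\rm FO}$-convergent sequence of rooted forests has a modeling limit --- not the \emph{inverse} problem of approximating a given modeling by finite structures. The inverse direction for tree-like mappings is precisely the content of Theorem~\ref{thm:invlocmap}, so your reduction assumes essentially everything that has to be proved, restricted to the acyclic case, which is where all the difficulty sits. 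For comparison, the paper does not prove this theorem in-text at all: it defers the proof to the companion paper \cite{MapApprox} and only outlines a route that is quite different from yours --- split $\mathbf L$ into large and small connected components; approximate each large connected component by a modeling of finite height, which has the finite model property and can therefore be ${\rm FO}$-approximated via Theorem~\ref{thm:invfomap}; for the small components, approximate the measure on rank-$R$ local types by a rational measure, realize it by a finite mapping, complete it with small models of the missing necessary local types, and tune the orders in the disjoint union. Nothing in your argument substitutes for that machinery.

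Even granting such a forest lemma, two further steps would need repair. First, the blowing step: you correctly identify that reproducing the in-degree distribution is the technical heart, but you do not carry it out; the FMTP identity $\nu_{\mathbf L}(A\cap f_{\mathbf L}^{-1}(B))=\int_B |f_{\mathbf L}^{-1}(y)\cap A|\,{\rm d}\nu_{\mathbf L}(y)$ gives mass balance, but converting it into an actual assignment of integer in-degrees that closes up into a finite mapping, and then verifying convergence of the $r$-ball statistics, is exactly what is missing --- and it cannot be patched by the QF tools you cite, since Lemma~\ref{lem:samp} and Lemma~\ref{lem:QFreg} control only quantifier-free statistics (sampling from an atomless modeling destroys neighborhoods entirely), so they do not ``carry over in essentially the same way'' to local formulas. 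Second, your cycle-cutting reduction does not apply to the acyclic components: a connected component of a mapping with no cyclic element has no edge to cut and no canonical root (such components can have $3$ or even $\omega$ ends, cf.\ the figures in the paper), so the ``rooted and marked tree'' format you feed into the forest lemma is simply unavailable for the part of $\mathbf L$ carried by those components.
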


\begin{theorem}
\label{thm:invfomap}
Every atomless modeling mapping $\mathbf L$ with  finite model property that satisfies the finitary mass transport principle is the 
${\rm FO}$-limit of an ${\rm FO}$-convergent sequence of finite mappings.
\end{theorem}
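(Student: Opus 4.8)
The plan is to build on the local inverse theorem (Theorem~\ref{thm:invlocmap}) and to add a single new ingredient accounting for the passage from ${\rm FO}^{\rm local}$ to ${\rm FO}$, namely the finite model property. The starting point is that ${\rm FO}$-convergence of $(\mathbf F_n)$ to $\mathbf L$ amounts to two requirements at once. First, $\langle\phi,\mathbf F_n\rangle\to\langle\phi,\mathbf L\rangle$ for every formula $\phi$ with free variables, a quantity governed by local densities. Second, for every \emph{sentence} $\theta$ one must have $\mathbf F_n\models\theta\iff\mathbf L\models\theta$ for all large $n$, since $\langle\theta,\cdot\rangle\in\{0,1\}$. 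The first requirement is already delivered by Theorem~\ref{thm:invlocmap}, whose hypotheses are precisely that $\mathbf L$ be atomless and satisfy the finitary mass transport principle; the second is where the finite model property must enter.

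To isolate what remains, I would invoke Gaifman's locality theorem: every first-order sentence is equivalent to a Boolean combination of \emph{basic local sentences}, each asserting the existence of $k$ pairwise far-apart elements realizing a fixed $r$-local type $\tau$. For a basic local sentence attached to a type $\tau$ of \emph{positive} density, both $\mathbf L$ and the approximants of Theorem~\ref{thm:invlocmap} contain unboundedly many scattered $\tau$-points, so its truth value is already matched and needs no correction. The only genuinely global data are therefore the \emph{exact finite counts} of the measure-zero local types --- the number of fixed points, the number of $k$-cycles carrying a prescribed finite decoration, and so on. These are invisible to ${\rm FO}^{\rm local}$ (they all contribute density zero) yet are read off by ${\rm FO}$, and they are exactly what the finite model property lets us control.

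Concretely, fix a quantifier rank $r$ and let $\theta_r$ be the rank-$r$ Hintikka sentence of $\mathbf L$, so $\mathbf L\models\theta_r$. By the finite model property there is a finite mapping $\mathbf H_r\models\theta_r$, that is $\mathbf H_r\equiv_r\mathbf L$; crucially $\mathbf H_r$ realizes simultaneously all the measure-zero counts of $\mathbf L$ up to rank $r$, and the finite model property certifies these counts to be mutually consistent in a finite mapping (it is here that impossible patterns, such as an injective but non-surjective finite mapping, are excluded). Starting from an ${\rm FO}^{\rm local}$-approximating sequence $\mathbf G_n\xrightarrow{{\rm FO}^{\rm local}}\mathbf L$, I would delete from $\mathbf G_n$ the vanishing fraction of elements carrying the relevant measure-zero types and graft in a copy of $\mathbf H_r$ in their place, obtaining $\mathbf F_n$. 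Since $|\mathbf H_r|$ is bounded and the deleted part has size $o(|G_n|)$ while $|G_n|\to\infty$, this surgery changes every statistic $\langle\phi,\cdot\rangle$ with free variables by $o(1)$, so the local-density convergence survives; at the same time $\mathbf F_n$ acquires exactly the rank-$r$ invariants of $\mathbf L$, hence $\mathbf F_n\equiv_r\mathbf L$. Letting $r=r(n)\to\infty$ slowly and diagonalizing produces a single sequence with $\mathbf F_n\xrightarrow{{\rm FO}}\mathbf L$.

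The main obstacle is the surgery of the third step: one must correct the exact counts of the measure-zero features without disturbing the ${\rm FO}^{\rm local}$-limit and while keeping each $\mathbf F_n$ a genuine mapping (every element must still have an image, so one cannot simply delete the target of a retained element). This forces the grafting to be compatible with how disjoint sums of mappings compose under first-order logic (a Feferman--Vaught--type analysis), so that the counts contributed by the retained bulk of $\mathbf G_n$ and by the grafted $\mathbf H_r$ add up to exactly those of $\mathbf L$. The finite model property enters at precisely this point, guaranteeing that a finite mapping carrying all of $\mathbf L$'s rank-$r$ invariants --- together with whatever compatibility constraints the retained bulk imposes --- actually exists; verifying that the three hypotheses (atomless, finitary mass transport, finite model property) are jointly sufficient to run this correction uniformly in $r$ is the crux of the argument.
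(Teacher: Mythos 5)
Your central device --- using the finite model property to extract a finite mapping $\mathbf H_r$ satisfying the rank-$r$ Hintikka sentence of $\mathbf L$, combining it with a large bulk that carries the local statistics, and diagonalizing over $r$ --- does coincide with the heart of the paper's argument (its step of taking a finite mapping $\mathbf M$ equivalent to $\mathbf L$ up to a huge quantifier rank and merging it with many copies of a finite mapping $\mathbf F$). But your route to the bulk contains two genuine gaps. The first is structural: you take Theorem~\ref{thm:invlocmap} as your starting point, whereas the paper derives Theorem~\ref{thm:invlocmap} \emph{from} Theorem~\ref{thm:invfomap} (its large connected components are ${\rm FO}$-approximated by truncating to finite height and invoking Theorem~\ref{thm:invfomap}). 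So within this paper's architecture your argument is circular; the bulk must be built independently, which is exactly what the paper's steps (1)--(5) accomplish: reduce to components of measure at most $\epsilon$, remove from $\mathbf L$ itself all elements with zero-measure rank-$R$ local type, cut short circuits by interpretation, approximate the measure on rank-$R$ local types by a rational measure, and construct a finite $\mathbf F$ realizing exactly those types --- so that the bulk provably carries no spurious features.

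The second gap is the step you yourself flag as ``the crux,'' and it genuinely fails as described. ${\rm FO}^{\rm local}$-convergence controls densities only, and your claim that basic local sentences attached to positive-density types are ``automatically matched'' is false: a positive-density type need not admit scattered realizations at all. Concretely, let $\mathbf L$ be a single infinite star (atomless measure on the leaves, all mapped to one fixed point); both one star of size $n$ and two disjoint stars of size $n/2$ are ${\rm FO}^{\rm local}$-approximations of $\mathbf L$, yet the two-star sequence satisfies the basic local sentence ``there exist two leaves at distance greater than $2$,'' which $\mathbf L$ does not. Here the offending elements carry a type of density one, so deleting ``the vanishing fraction of elements carrying measure-zero types'' does not touch them; and no grafting can help, since adding $\mathbf H_r$ can only create realizations of existential sentences, never destroy them. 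More generally, an approximant $\mathbf G_n$ may contain unboundedly many realizations of measure-zero features (say $\sqrt{|G_n|}$ fixed points when $\mathbf L$ has exactly one), and since counts add under disjoint union (Feferman--Vaught), any spurious count surviving in the bulk ruins $\mathbf F_n\equiv_r\mathbf L$ regardless of what you graft; removing them is not a small perturbation either, because deleting an element orphans its preimages and creates new local types. This is precisely why the paper constructs the bulk from scratch rather than repairing an arbitrary ${\rm FO}^{\rm local}$-approximant: the hypotheses (atomless, FMTP, FMP) enter through that construction, not through a surgery whose feasibility is left unverified in your proposal.
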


We do not include the proofs of these results as they are lengthy and increasingly more model theory related. This will appear elsewhere and the interested reader may consult the companion paper \cite{MapApprox}.

So we see that the question of limits is not easy, even for the simplest case of one mapping. It is  an interesting question to try to extend (some of) these results for several mappings, say $f_1,\dots,f_d$ (on a same set). 

For the construction of a limit object there is some hope that our approach could extend to the construction (or at least to a proof of existence) of a Borel limit structure in the case of QF-convergent sequences.

However,  the existence of a modeling limit for FO-convergent sequences of algebras with $d\geq 2$ mappings is ruled out by  \cite[Theorem 27]{modeling}, as it would allow to construct modeling limits for monotone somewhere dense classes of graphs (see \cite{Sparsity} for more about nowhere dense classes and related notions).

Also, although Borel QF-limits may exist for $d=2$, the inverse theorem (which we established for $d=1$ as Theorem~\ref{thm:invqfmap}) is a hard problem as it involves Aldous-Lyons conjecture \cite{Aldous2006}. 
In this respect the above may shed more light on this difficult problem.

\section*{Acknowledgments}
The authors wish to thank to the anonymous referees for their careful reading and for their valuable comments and suggestions, which definitely improved the presentation of this paper.

\section*{References}
\providecommand{\bysame}{\leavevmode\hbox to3em{\hrulefill}\thinspace}
\providecommand{\MR}{\relax\ifhmode\unskip\space\fi MR }
\providecommand{\MRhref}[2]{%
  \href{http://www.ams.org/mathscinet-getitem?mr=#1}{#2}
}
\providecommand{\href}[2]{#2}

\end{document}